\newtheorem{theorem}{Theorem}[section]
\newtheorem{lemma}[theorem]{Lemma}
\newtheorem{corollary}[theorem]{Corollary}
\theoremstyle{remark}
\newtheorem{remark}[theorem]{Remark}
\DeclareMathOperator{\image}{{\mathrm{Im}}}
\DeclareMathOperator{\soc}{{\mathrm{soc}}}
\DeclareMathOperator{\head}{{\mathrm{head}}}
\DeclareMathOperator{\rad}{{\mathrm{rad}}}
\newcommand{\id}{{\mathrm{id}}}
\DeclareMathOperator{\Ext}{{\mathrm{Ext}}}
\DeclareMathOperator{\Hom}{{\mathrm{Hom}}}
\DeclareMathOperator{\End}{{\mathrm{End}}}
\newcommand{\abs}[1]{|#1|}	
\newcommand{\dprod}[2]{\langle {#1} , {#2}\rangle}	
\newcommand{\F}{{\mathbf {F}}} 
\newcommand{\R}{{\mathbf {R}}} 
\newcommand{\Z}{{\mathbf {Z}}}
\DeclareMathOperator{\Orth}{{\mathrm {O}}} 	
\DeclareMathOperator{\PO}{{\mathrm {PO}}} 	
\DeclareMathOperator{\Om}{{\mathrm {\Omega}}} 	
\DeclareMathOperator{\POm}{{\mathrm {P\Omega}}} 	
\DeclareMathOperator{\PGU}{{\mathrm {PGU}}} 	
\DeclareMathOperator{\SU}{{\mathrm {SU}}} 	
\DeclareMathOperator{\PSU}{{\mathrm {PSU}}} 	
\DeclareMathOperator{\rank}{{\mathrm {rank}}} 	
\DeclareMathOperator{\Ch}{{\mathrm {Ch}}} 	
\DeclareMathOperator{\sign}{{\mathrm {sign}}} 	
\DeclareMathOperator{\ind}{{\mathrm {ind}}} 	
\newcommand{\allone}{{\mathbf 1}} 	
\newcommand{\ds}{\displaystyle}
\newcommand{\dt}{\langle \lambda+\rho, \alpha \rangle}
\newcommand{\n}{\lambda+\rho - m\,p\,\alpha}
\newcommand{\el}{\frac{\ell}{2}}
\newcommand{\bir}{\lambda+\rho - m\,p\,\alpha}
\newcommand{\kay}{\chi(\lambda - m\,p\,\alpha)}
\begin{document}
\title[Some simple modules for classical groups]{Some simple modules for classical groups and $p$-ranks of orthogonal and Hermitian geometries}
\author{Ogul Arslan}
\address{Department of Mathematics and Statistics\\Coastal Carolina University\\ P. O. Box 261954\\ Conway SC 29528-6054\\ USA}
\author{Peter Sin}
\address{Department of Mathematics\\University of Florida\\ P. O. Box 118105\\ Gainesville
FL 32611\\ USA}
\date{August 4th, 2009}
\begin{abstract}
We determine the characters of the simple composition 
factors and the submodule lattices of certain Weyl modules for classical groups.
The results have several applications. The simple modules
arise in the study of incidence systems in finite geometries and 
knowledge of their dimensions yields the $p$-ranks of these incidence systems.
\end{abstract}
\maketitle



\section{Introduction} 
In this paper we  study some special cases of the general
problem of determining the formal characters of irreducible rational
representations of a semisimple algebraic group $G$ over an
algebraically closed field $k$ of characteristic $p>0$.
The groups we shall consider are classical groups and
the simple modules are related to their standard modules.
To be more precise, if we number the fundamental weights 
so that the first one, $\omega_1$  is 
the highest weight of the standard module, then simple
modules to be studied are those having highest weight
$\lambda$ in the following list.

\begin{enumerate}
\item[(B)] For $G$ of type $B_\ell$, ($\ell\geq 2$)  $\lambda=r(\omega_1)$, $0\leq r\leq p-1$;

\item[(D)]For $G$ of type $D_\ell$, ($\ell\geq 3$) $\lambda=r(\omega_1)$, $0\leq r\leq p-1$;

\item[(A)] For $G$ of type $A_\ell$, ($\ell\geq 3$)  $\lambda=r(\omega_1+\omega_\ell)$, $0\leq r\leq p-1$;

\item[(A$^\prime$)]For $G$ of type $A_\ell$, ($\ell\geq 4$)  $\lambda=\omega_2+\omega_{\ell-1}$; and

\item[(A$^{\prime\prime}$)] For $G$ of type $A_4$, $\lambda=(p-2)(\omega_2+\omega_{\ell-1})$
or $(p-1)(\omega_2+\omega_{\ell-1})$.
\end{enumerate}

For each weight in this list, our purpose is to give a complete
description of the composition factors and the submodule structure of the 
corresponding Weyl module $V(\lambda)$ or, dually, the induced module $H^0(\lambda)$.
The formal characters of these modules are given by Weyl's Character formula.

We should also mention some weights which do not appear in above list 
but which play the same role in applications. These are the weights
$\lambda=r\omega_1$, $0\leq r\leq p-1$ for groups of type $A_\ell$ and $C_\ell$.
The Weyl modules for these weights are isomorphic to symmetric
powers of the standard module and are all simple.

\subsection{Statements of results}
The main tool we use for examining the deeper structure of Weyl modules is
the Jantzen Sum Formula (\cite[II.8.19]{Ja}). 
In general, due to  the recursive nature of its application, 
the formula can be expected to yield only partial information about 
composition factors. In certain cases one can obtain precise results, as demonstrated in the original paper \cite{Ja2}.
The weights in the list above give further examples 
where complete results can be obtained.
For these cases, the combinatorics of the Sum Formula can be kept under control
and we obtain accurate bounds which, when supplemented with certain properties of  good filtrations  and classical facts about tensor products, yield the full submodule 
structure of the Weyl modules.

We can now state our main results.

\begin{theorem}\label{ThmB} Let $G$ be of type $B_\ell$, $\ell\geq 2$. Let
$\omega_1$ be the highest weight of the standard orthogonal module of dimension
$2\ell+1$. Assume $0\leq r\leq p-1$. 
Then the following hold.
\begin{enumerate}
\item[(a)] $H^0(r\omega_1)$ is simple unless
(i) $p=2$ and $r=1$ or (ii) $p>2$ and there exists a positive \emph{odd} integer $m$ such that
\begin{equation*}
r+2\ell-1\leq mp\leq 2r+2\ell-2.
\end{equation*}  
\item[(b)] If (i) holds  then the  quotient $H^0(\omega_1)/L(\omega_1)$ 
is the one-dimensional trivial module.
\item[(c)] If (ii) holds then  $m$ is unique and 
\begin{equation*} 
H^0(r\omega_1)/L(r\omega_1)\cong H^0(r_1\omega_1),
\end{equation*}
where $r_1=mp-2\ell+1-r$.
Furthermore the module $H^0(r_1\omega_1)$ is simple.
\item[(d)] We have
\begin{equation*}
\dim L(r\omega_1)=\begin{cases}
2\ell,\qquad\text{if  (i) holds}.\\
\binom{2\ell+r}{2\ell}-\binom{2\ell+r-2}{2\ell}-\binom{mp-r+1}{2\ell}+\binom{mp-r-1}{2\ell},\qquad\text{if (ii) holds.}\\
\binom{2\ell+r}{2\ell}-\binom{2\ell+r-2}{2\ell}, \qquad\text{otherwise.}
\end{cases}
\end{equation*}
\end{enumerate}
\end{theorem}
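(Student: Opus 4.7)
The approach is to apply the Jantzen Sum Formula (JSF) to the Weyl module $V(r\omega_1)$, whose composition factors match those of $H^0(r\omega_1)$. In the standard orthonormal basis,
\begin{equation*}
\lambda+\rho = \bigl(r+\ell-\tfrac12,\; \ell-\tfrac32,\; \ldots,\; \tfrac12\bigr).
\end{equation*}
A direct computation of $\dprod{\lambda+\rho}{\alpha^\vee}$ for each positive root shows: the roots $\alpha$ supported on $\{\epsilon_2,\ldots,\epsilon_\ell\}$ produce pairings independent of $r$; the long roots $\epsilon_1\pm\epsilon_j$ ($j\ge 2$) produce $r+j-1$ and $r+2\ell-j$; and the short root $\epsilon_1$ produces $2r+2\ell-1$. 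The first task is to enumerate all pairs $(\alpha,m)$ with $0<mp<\dprod{\lambda+\rho}{\alpha^\vee}$ and reduce each resulting $\chi(s_{\alpha,mp}\cdot\lambda)$ to a standard $\pm\chi(\mu)$ with $\mu$ dominant.

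The central step is to show that the JSF collapses to a sum over just the short root $\epsilon_1$ with $m$ odd. The roots in $\{\epsilon_2,\ldots,\epsilon_\ell\}$ form a subsystem of type $B_{\ell-1}$ with respect to which the restricted $\rho$ coincides with $\rho_{B_{\ell-1}}$; the contributions from these roots reassemble into the JSF for the trivial module of the Levi subgroup of type $B_{\ell-1}$, hence vanish since $V(0)=k$ is simple. For the long roots $\epsilon_1\pm\epsilon_j$, each term $\chi(s_{\epsilon_1\pm\epsilon_j,mp}\cdot\lambda)$ reduces, via the Weyl group action inside $\chi$, to $\pm\chi(r_1\omega_1)$ for an appropriate $r_1$; I would pair each such term with an even-$m$ contribution from the short root $\epsilon_1$ of the opposite sign and the same $\nu_p$-weight. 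The combinatorial identity driving the pairing is $(r+j-1)+(r+2\ell-j)=2r+2\ell-1$, which ensures that the reflected weights from the long roots and the even-$m$ short-root reflections land in the same $W$-orbit.

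What survives is $\chi(s_{\epsilon_1,mp}\cdot\lambda)=\chi((mp-2\ell+1-r)\omega_1)$ for $m$ odd. Dominance requires $mp\ge r+2\ell-1$, and producing a proper composition factor requires $mp\le 2r+2\ell-2$; since $r\le p-1$, the interval has width less than $p$ and contains at most one multiple of $p$, yielding the uniqueness of $m$. When no such $m$ exists the JSF sum is zero and $V(r\omega_1)$ is simple, giving (a); the case $p=2$, $r=1$ needed in (b) is handled separately, using the classical fact that the $(2\ell+1)$-dimensional natural orthogonal module in characteristic $2$ has a one-dimensional radical spanned by the form vector. In the remaining case the JSF yields the bound $\ch\rad V(r\omega_1)\le\chi(r_1\omega_1)$ with $r_1=mp-2\ell+1-r<r$. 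Induction on $r$ shows $H^0(r_1\omega_1)$ is simple: any reflecting datum for $r_1$ would force a second multiple of $p$ in a length-$(r+r_1)$ interval, contradicting the uniqueness of $m$. A standard good-filtration argument then upgrades the character bound to the isomorphism $H^0(r\omega_1)/L(r\omega_1)\cong H^0(r_1\omega_1)$ of (c), and the formula in (d) follows by subtracting Weyl's dimension $\dim H^0(s\omega_1)=\binom{2\ell+s}{2\ell}-\binom{2\ell+s-2}{2\ell}$ for $s=r_1$ from the one for $s=r$.

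The main obstacle is the cancellation step: the JSF a priori contains many terms from long and short roots and from all valid $m$, and showing that everything except odd-$m$ short-root contributions cancels requires careful bookkeeping of signs, $p$-adic valuations $\nu_p(mp)$, and the Weyl reductions of each $\chi(s_{\alpha,mp}\cdot\lambda)$. The identity of pairings $(r+j-1)+(r+2\ell-j)=2r+2\ell-1$ is the clue to the cancellation, but turning it into a bijection on JSF terms with matching multiplicities is the heart of the argument.
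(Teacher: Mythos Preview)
Your overall strategy matches the paper's: evaluate the Jantzen Sum Formula, show that only the short root $e_1$ with odd $m$ survives, deduce uniqueness of $m$ and simplicity of $H^0(r_1\omega_1)$, then pin down the multiplicity. Two points deserve correction, however.

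First, the cancellation mechanism is not the one you describe. The paper does not pair $e_1\pm e_j$ terms with even-$m$ short-root terms via the identity $(r+j-1)+(r+2\ell-j)=2r+2\ell-1$; that identity plays no role. Instead, the paper checks directly that for every root not equal to $e_1$ or $e_1+e_j$, and for every relevant $m$, the vector $\lambda+\rho-mp\alpha$ has two coordinates of equal absolute value, so $\chi(\lambda-mp\alpha)=0$ outright. (In particular all $e_1-e_j$ terms vanish this way; no pairing is needed. Your Levi heuristic for the roots in $\{e_2,\dots,e_\ell\}$ is also not quite right, since $\chi$ is computed for $G$, not the Levi; the paper just uses the same coordinate check.) After this, exactly one $e_1+e_j$ term can survive, namely the one with $j=mp-r+1$ and $r+2\ell-1\le 2mp<2r+2\ell-1$, and it cancels against the even-$M$ contribution from $e_1$ (completely for $p$ odd, leaving a coefficient~$1$ for $p=2$). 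So the bookkeeping is much lighter than you anticipate, but the bijection you propose is not the mechanism.

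Second, and more seriously, the JSF does \emph{not} give $\ch\rad V(r\omega_1)\le\chi(r_1\omega_1)$. It gives $\sum_{i>0}\ch V(r\omega_1)^i=v_p(mp)\,\chi(r_1\omega_1)$, and $v_p(mp)$ can exceed~$1$ (e.g.\ $p=3$, $\ell=4$, $r=2$, $m=3$). So from the Sum Formula alone one only knows that $\rad V(r\omega_1)$ is a direct sum of $e$ copies of $L(r_1\omega_1)$ for some $1\le e\le v_p(mp)$. The step $e=1$ is the content of a separate lemma: the paper observes that $S^r(V^*)$ is self-dual (here $p>2$ is used) and has a good filtration whose sections $H^0((r-2j)\omega_1)$ each occur with multiplicity one, and then bounds $\dim\Hom_G(V(r_1\omega_1),V(r\omega_1))$ by the good-filtration multiplicity of $H^0(r_1\omega_1)$ in $S^r(V^*)$, which is~$1$. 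This is the ``good-filtration argument'' you allude to, but it is doing the real work of bounding the composition multiplicity, not merely upgrading a character equality to a module isomorphism.
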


\begin{theorem}\label{ThmD} Let $G$ be of type $D_\ell$, $\ell\geq 3$.
 Let $\omega_1$ be the highest weight of the standard orthogonal module of dimension $2\ell$. Assume $0\leq r\leq p-1$.  Then the following hold.
\begin{enumerate}
\item[(a)] Suppose that
 there exists a positive \emph{even} integer $m$ such that
\begin{equation*}
r+2\ell-2 \leq mp\leq 2r+2\ell-3.
\end{equation*}
Then $m$ is unique and 
\begin{equation*}
H^0(r\omega_1)/L(r\omega_1)\cong H^0(r_1\omega_1),
\end{equation*}
where 
$r_1=mp-2\ell+2-r$. 
Furthermore the module $H^0(r_1\omega_1)$ is simple.
\item[(b)] Otherwise, $H^0(r\omega_1)$ is simple.
\item[(c)] We have
\begin{equation*}
\dim L(r\omega_1)=\begin{cases}
\binom{2\ell+r-1}{2\ell-1}-\binom{2\ell+r-3}{2\ell-1}-\binom{mp-r+1}{2\ell-1}+\binom{mp-r-1}{2\ell-1}   ,\qquad\text{in (a).}\\
\binom{2\ell+r-1}{2\ell-1}-\binom{2\ell+r-3}{2\ell-1}, \qquad\text{in (b).}
\end{cases}
\end{equation*}
\end{enumerate}
\end{theorem}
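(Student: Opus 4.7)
The plan is to mirror the argument used for Theorem~\ref{ThmB}, applying the Jantzen Sum Formula \cite[II.8.19]{Ja} to $V(r\omega_1)$ with modifications appropriate to the $D_\ell$ root system. First I would enumerate the positive roots, namely $e_i\pm e_j$ for $1\leq i<j\leq \ell$ (all of equal length), and compute the pairings $\langle r\omega_1+\rho,\alpha^\vee\rangle$ using $\rho=(\ell-1,\ell-2,\ldots,1,0)$ and $\omega_1=e_1$. Because $0\leq r\leq p-1$, the only roots whose pairings can produce, after an affine reflection $s_{\alpha,mp}$, a weight that is Weyl-conjugate to a non-zero dominant weight are those of the form $e_1\pm e_j$; reflections by other roots either lead to weights whose $\nu+\rho$ has two coordinates equal in absolute value (so the Euler character vanishes) or fall outside the admissible Jantzen range. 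A direct computation of $s_{\alpha,mp}\cdot r\omega_1$ and its dominant representative under $S_\ell\ltimes(\Z/2)^{\ell-1}$ then shows that the only dominant weight which can appear is $r_1\omega_1$ with $r_1=mp-2\ell+2-r$, and matching this across the two root families requires $r+r_1$, hence $mp-2\ell+2$, to be even. For $p$ odd this forces $m$ even, exactly as claimed; for $p=2$ one checks directly that no admissible $mp$ exists. Collecting the Jantzen multiplicities $\nu_p(mp)$ and cancelling contributions from the $e_1-e_j$ and $e_1+e_j$ families then gives the upper bound: at most one composition factor $L(r_1\omega_1)$ with multiplicity one in case (a), and simplicity in case (b).

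For the matching lower bound in case (a), I would exhibit a non-zero $G$-equivariant surjection $H^0(r\omega_1)\twoheadrightarrow H^0(r_1\omega_1)$. The natural source is the realization of $H^0(r\omega_1)$ as the harmonic (traceless) part of $\mathrm{Sym}^r V(\omega_1)$ under the invariant quadratic form $q$, together with the Donkin--Wang theorem that tensor powers of $V(\omega_1)$ admit a good filtration. Since $r_1=r-2k$ with $k=r+\ell-1-mp/2\geq 0$ (using that $mp$ is even and $mp\geq r+2\ell-2$), iterated contraction by $q$ produces the desired map onto $H^0(r_1\omega_1)$ when the numerical hypothesis is met, giving the lower bound. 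Simplicity of $H^0(r_1\omega_1)$ then follows by verifying that $r_1$ itself does not satisfy the hypothesis of (a): since $r_1<r\leq p-1$, one checks directly that no integer $m'$ can satisfy $r_1+2\ell-2\leq m'p\leq 2r_1+2\ell-3$. Part (c) is a direct application of Weyl's character formula, using $\dim H^0(s\omega_1)=\binom{2\ell+s-1}{2\ell-1}-\binom{2\ell+s-3}{2\ell-1}$ together with $\dim L(r\omega_1)=\dim H^0(r\omega_1)-\dim H^0(r_1\omega_1)$ in case (a).

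The main technical obstacle I anticipate is the combinatorial bookkeeping in the Jantzen sum. Whereas for $B_\ell$ the short-root reflection $s_{e_1,mp}$ contributes the critical term cleanly and in isolation, for $D_\ell$ both families of roots $e_1\pm e_j$ (as $j$ ranges over $2,\ldots,\ell$) contribute, and one must show that their individual contributions, once Jantzen multiplicities and signs from the Weyl-group normalization are accounted for, combine to exactly one term $\chi(r_1\omega_1)$. Establishing that the parity constraint on $m$ emerges naturally from this combinatorics, and that all apparent extra contributions cancel pairwise, will be the most delicate part of the argument; the remaining pieces are then mostly bookkeeping and standard manipulations with Weyl's character formula.
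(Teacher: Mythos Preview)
Your overall strategy is right, but the proposal contains a genuine gap in the multiplicity-one step, and the roles of upper and lower bounds are reversed.

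First, the combinatorics is simpler than you anticipate. In type $D_\ell$ the roots $e_1-e_j$ contribute \emph{nothing} to the Jantzen sum: for any admissible $m$ the vector $\lambda+\rho-mp(e_1-e_j)$ has two equal coordinates, so Lemma~\ref{critstar} applies directly. There is no cancellation between the $e_1-e_j$ and $e_1+e_j$ families; only $e_1+e_j$ for a unique $j$ survives. The parity condition arises because the surviving constraint reads $r+2\ell-2\leq 2mp\leq 2r+2\ell-3$, so the integer $M$ in the theorem is $2m$, automatically even. (Contrast this with $B_\ell$, where it is the cancellation between the short root $e_1$ and $e_1+e_j$ that is delicate.)

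The real gap is this: the Jantzen sum for $\lambda=r\omega_1$ evaluates to $v_p(\tfrac{M}{2}p)\,\chi(r_1\omega_1)$, and the coefficient $v_p(\tfrac{M}{2}p)$ can exceed $1$ when $\ell$ is large relative to $p$. So the sum formula by itself gives only $[V(r\omega_1):L(r_1\omega_1)]\leq v_p(\tfrac{M}{2}p)$, not $\leq 1$. Your claim that the cancellation yields multiplicity one is therefore incorrect. Conversely, the \emph{lower} bound $[V(r\omega_1):L(r_1\omega_1)]\geq 1$ is automatic once the sum is nonzero (since $V(\lambda)^1=\rad V(\lambda)$), so your proposed explicit surjection via iterated contraction is not needed for that purpose---and in any case contraction is a map $S^r(V)\to S^{r-2}(V)$, not a map between the quotients $H^0(r\omega_1)$.

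What is actually required to pin down multiplicity one is the argument of Lemma~\ref{multifree}: use that $S^r(V^*)$ is self-dual (from $V\cong V^*$) and has a good filtration with each $H^0((r-2j)\omega_1)$ occurring exactly once, then bound $\dim\Hom_G(V(r_1\omega_1),V(r\omega_1))$ via $\dim\Hom_G(V(r_1\omega_1),S^r(V^*))\leq 1$. You have the ingredients (you mention Donkin--Wang), but you deploy them for the wrong inequality.

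Finally, your check that $H^0(r_1\omega_1)$ is simple needs the congruence observation $r_2\equiv r\pmod p$ (where $r_2$ would be the next parameter), not merely $r_1<r$; the inequality alone does not rule out an admissible $m'$.
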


For groups of type $A_\ell$ we number the  fundamental weights be so that 
$\omega_i$ is the highest weight of the $i$-th exterior power
of the $(\ell+1)$-dimensional standard module.

\begin{theorem}\label{ThmA} Let $G$ be of type $A_\ell$, $\ell\geq 3$.
Assume $0\leq r\leq p-1$. Then the following hold.
\begin{enumerate}
\item[(a)] Suppose that there exists a positive integer $m$ such that
\begin{equation*}
r+\ell\leq mp\leq 2r+\ell-1.
\end{equation*}
Then $m$ is unique and 
\begin{equation*}
H^0(r(\omega_1+\omega_\ell))/L(r(\omega_1+\omega_\ell))\cong 
H^0(r_1(\omega_1+\omega_\ell)),
\end{equation*}
where $r_1=mp-\ell -r$. 
Furthermore the module $H^0(r_1(\omega_1+\omega_\ell))$ is simple.
\item[(b)] Otherwise, $H^0(r(\omega_1+\omega_\ell))$ is simple.
\item[(c)] We have
\begin{equation*}
\dim L(r(\omega_1+\omega_\ell))=\begin{cases}
\binom{\ell+r}{\ell}^2-\binom{\ell+r-1}{\ell}^2 -\binom{mp-r}{\ell}^2+\binom{mp-r-1}{\ell}^2, \qquad\text{in (a).}\\
\binom{\ell+r}{\ell}^2-\binom{\ell+r-1}{\ell}^2, \qquad\text{in (b).}
\end{cases}
\end{equation*}
\end{enumerate}
\end{theorem}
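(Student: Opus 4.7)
The plan is to follow the same strategy as Theorems \ref{ThmB} and \ref{ThmD}: upper-bound the Jantzen layers via the Sum Formula, lower-bound them using a tilting tensor product, and compute dimensions from Weyl's formula. First, write $\lambda+\rho$ in $\epsilon$-coordinates as
$$\lambda+\rho=(2r+\ell,\,r+\ell-1,\,\ldots,\,r+1,\,0),$$
so $\langle\lambda+\rho,\alpha^\vee\rangle$ for $\alpha=\epsilon_i-\epsilon_j$ is just the difference of coordinates. The decisive reflection is by the highest root $\alpha=\epsilon_1-\epsilon_{\ell+1}$: here $\langle\lambda+\rho,\alpha^\vee\rangle=2r+\ell$ and
$$s_{\alpha,mp}\cdot\lambda=(mp-\ell,\,r,\,\ldots,\,r,\,2r+\ell-mp)=r_1(\omega_1+\omega_\ell),\qquad r_1=mp-\ell-r,$$
which is dominant precisely when $r+\ell\le mp\le 2r+\ell-1$. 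Since this interval has length $r-1<p$, at most one $m$ qualifies.

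The combinatorial heart of the Jantzen analysis is to dispose of all other positive roots. For interior roots $\alpha=\epsilon_i-\epsilon_j$ with $2\le i<j\le\ell$ one has $\langle\lambda+\rho,\alpha^\vee\rangle=j-i$, and the reflection moves only positions $i$ and $j$ within the flat middle block; the new $i$-th coordinate $r+\ell-j+1+mp$ always matches the unchanged coordinate at position $j-mp\in\{i+1,\ldots,j-1\}$, so $\chi=0$. For mixed roots $\alpha_{1,j}$ with $2\le j\le\ell$, the singular values of $mp$ cover $\{1,\ldots,j-2\}\cup\{r+1,\ldots,r+j-2\}$, leaving only the gap $\{j-1,\ldots,r\}$; since $mp\ge p\ge r+1$ is incompatible with $mp\le r$, no such reflection contributes. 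The roots $\alpha_{i,\ell+1}$ with $i>1$ are handled identically by the diagram symmetry of $\lambda$. Hence the Jantzen Sum reduces to $\nu_p(mp)\,\chi(r_1(\omega_1+\omega_\ell))$ in case (a) and to $0$ in case (b).

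For the lower bound, $V(r\omega_1)$ and $V(r\omega_\ell)$ are symmetric powers of fundamental modules and hence simple for $0\le r\le p-1$, so both are tilting; their tensor product $T=V(r\omega_1)\otimes V(r\omega_\ell)$ is again tilting, and Pieri's rule --- transported from characteristic zero through tilting --- identifies the Weyl-filtration factors of $T$ as $V(k(\omega_1+\omega_\ell))$ for $k=0,\ldots,r$, each of multiplicity one. Applying the Jantzen analysis to $r_1(\omega_1+\omega_\ell)$ shows its interval $[mp-r,\,2mp-\ell-2r-1]$ contains no multiple of $p$ (a short check using $mp\le 2r+\ell-1$ and $r\le p-1$), so $H^0(r_1(\omega_1+\omega_\ell))=V(r_1(\omega_1+\omega_\ell))$ is simple. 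Matching characters forces the quotient isomorphism in (a), and subtracting $\dim H^0(r_1(\omega_1+\omega_\ell))=\binom{mp-r}{\ell}^2-\binom{mp-r-1}{\ell}^2$ from the Weyl dimension $\binom{\ell+r}{\ell}^2-\binom{\ell+r-1}{\ell}^2$ of $H^0(r(\omega_1+\omega_\ell))$ gives the formula in (c).

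The main obstacle is the singular-weight accounting for the mixed roots $\alpha_{1,j}$ and $\alpha_{i,\ell+1}$: as $p$ shrinks relative to $\ell$, many potential reflections become available, and verifying uniformly in $p$, $r$, $\ell$ that every admissible $mp$ yields a singular weight or falls outside the feasible range is where most of the technical work lies.
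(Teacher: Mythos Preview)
Your approach is essentially the paper's: evaluate the Jantzen Sum Formula root by root, show that only $\alpha=e_1-e_{\ell+1}$ contributes (and only for the unique $m$ in the stated interval), verify that $V(r_1(\omega_1+\omega_\ell))$ is simple, and then use the tensor product $T=V(r\omega_1)\otimes V(r\omega_\ell)$ with its multiplicity-one Weyl/good filtration (Pieri) to pin down the multiplicity. Your interval check for the simplicity of $V(r_1(\omega_1+\omega_\ell))$ is a valid alternative to the paper's observation that $r_2\equiv r\pmod p$.

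There is one genuine gap in the write-up, however. The phrase ``matching characters forces the quotient isomorphism'' does not establish that the multiplicity $e=[V(\lambda):L(r_1(\omega_1+\omega_\ell))]$ equals $1$. The Jantzen Sum Formula yields $\sum_{i>0}\Ch V(\lambda)^i=v_p(mp)\chi(r_1(\omega_1+\omega_\ell))$, and since $V(\lambda)^1=\rad V(\lambda)$ has character $e\,\chi(r_1(\omega_1+\omega_\ell))$, this only gives $e\le v_p(mp)$. For $\ell\ge (p-1)^2+2$ one can have $p\mid m$ and hence $v_p(mp)\ge 2$, so the Sum Formula alone is inconclusive. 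You must actually use the filtration of $T$: since $V(r_1(\omega_1+\omega_\ell))$ is simple, $\dim\Hom_G(L(r_1(\omega_1+\omega_\ell)),T)=\dim\Hom_G(V(r_1(\omega_1+\omega_\ell)),T)$, and the latter equals the good-filtration multiplicity of $H^0(r_1(\omega_1+\omega_\ell))$ in $T$, which is $1$. As $V(\lambda)$ embeds in $T$ (top Weyl factor) and $\soc V(\lambda)=L(r_1(\omega_1+\omega_\ell))^{\oplus e}$, this gives $e\le 1$. The paper carries out exactly this $\Hom$ count (phrased dually via $H^0(\lambda)$ as a quotient of $T$); your tilting setup is the right one, but the argument needs this step made explicit.

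A minor slip: in your interior-root paragraph you write ``the new $i$-th coordinate $r+\ell-j+1+mp$'', but that is the new $j$-th coordinate; the conclusion (a repeated coordinate at position $j-mp$) is nonetheless correct.
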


\begin{theorem}\label{ThmA'} Let $G$ be of type $A_\ell$, $\ell\geq 4$.
If $p>2$  then the following hold.
\begin{enumerate}
\item[(a)] If $\ell = 0\pmod p$ then
$H^0(\omega_2 + \omega_{\ell-1})/L(\omega_2 + \omega_{\ell-1})\cong k$.
\item[(b)] If  $\ell = 1 \pmod p$ then
\begin{equation*}
H^0(\omega_2 + \omega_{\ell-1})/L(\omega_2 + \omega_{\ell-1})\cong 
H^0(\omega_1+\omega_{\ell})
\end{equation*}
and this module is simple.
\item[(c)] In all other cases $H^0(\omega_2+\omega_{\ell-1})$ is simple.
\end{enumerate}

If $p=2$ then the following hold.
\begin{enumerate}
\item[(d)] If $\ell\equiv 0\pmod 4$ then 
\begin{equation*}
H^0(\omega_2 + \omega_{\ell-1})/L(\omega_2 + \omega_{\ell-1})\cong k.
\end{equation*}
\item[(e)] If $\ell\equiv 1\pmod 4$ then
\begin{equation*}
H^0(\omega_2 + \omega_{\ell-1})/L(\omega_2 + \omega_{\ell-1})\cong 
V(\omega_1+\omega_\ell).
\end{equation*}
\item[(f)] If  $\ell\equiv 2\pmod 4$ then $H^0(\omega_2 + \omega_{\ell-1})$ 
is simple. 
\item[(g)] If  $\ell\equiv 3\pmod 4$ then
\begin{equation*}
H^0(\omega_2 + \omega_{\ell-1})/L(\omega_2 + \omega_{\ell-1})\cong L(\omega_1+\omega_{\ell}).
\end{equation*}
\end{enumerate}

We have
\begin{enumerate}
\item[(h)]
\begin{equation*}
\dim L(\omega_2+\omega_{\ell-1})=\begin{cases}
    \frac{1}{4}\ell^4+\frac{1}{2}\ell^3-\frac{3}{4}\ell^2 -2\ell-2, \qquad \text{in (a) and (d).} \\
    \frac{1}{4}(\ell+2)(\ell^3-7\ell-2), \qquad \text{in  (b) and (e).}\\
    \frac{1}{4}(\ell-2)(\ell+2)(\ell+1)^2, \qquad\text{in (c) or (f).}\\
    \frac{1}{4}\ell(\ell+3)(\ell^3+2\ell^2-7\ell-16), \qquad\text{in (g).}
\end{cases}
\end{equation*}
\end{enumerate}

\end{theorem}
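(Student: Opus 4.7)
The plan is to apply the Jantzen Sum Formula to $V(\omega_2+\omega_{\ell-1})$, analyze the resulting character expression case by case, and then identify which composition factors actually occur via the good filtration of the tensor product $H^0(\omega_2)\otimes H^0(\omega_{\ell-1})$.

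In standard $\epsilon$-coordinates for type $A_\ell$ one has $\lambda+\rho=(\ell+2,\ell+1,\ell-1,\ell-2,\ldots,3,1,0)$, so for a positive root $\alpha=\epsilon_i-\epsilon_j$ the pairing $\langle\lambda+\rho,\alpha^\vee\rangle$ is simply the difference of the $i$-th and $j$-th entries, which lies in $[1,\ell+2]$. First I would enumerate the pairs $(\alpha,m)$ with $0<mp<\langle\lambda+\rho,\alpha^\vee\rangle$ for which $s_{\alpha,mp}\cdot\lambda$ is dominant, compute the corresponding $\chi(s_{\alpha,mp}\cdot\lambda)$, and combine with the appropriate $p$-adic valuations. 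For $p$ odd only $mp\in\{\ell-1,\ell,\ell+1,\ell+2\}$ yields a dominant reflected weight, so a non-trivial contribution requires $p$ to divide one of $\ell-1,\ell,\ell+1$; a short calculation shows that the $p\mid\ell+1$ contributions cancel in pairs, while $p\mid\ell$ leaves $\chi(0)$ and $p\mid\ell-1$ leaves $\chi(\omega_1+\omega_\ell)$. Together with the classical fact that $H^0(\omega_1+\omega_\ell)$ is simple when $p\nmid\ell+1$, this gives parts (a)--(c) as upper bounds.

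For $p=2$ essentially every positive root can contribute, so I would reduce $\lambda+\rho\pmod 2$ and partition the sum by linkage orbits, separating according to $\ell\pmod 4$. At the character level the four residues single out $\chi(0)$, $\chi(\omega_1+\omega_\ell)$, $0$, and $\chi(\omega_1+\omega_\ell)-\chi(0)$ as the Jantzen sum, matching cases (d)--(g). To upgrade each upper bound to an actual identification of $H^0(\lambda)/L(\lambda)$, I would use that $H^0(\omega_2)\otimes H^0(\omega_{\ell-1})$ has a good filtration with factors $H^0(0)$, $H^0(\omega_1+\omega_\ell)$, and $H^0(\omega_2+\omega_{\ell-1})$ each occurring with multiplicity one, by Pieri in characteristic zero together with the tensor product theorem for good filtrations. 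Since $H^0(\omega_2+\omega_{\ell-1})$ appears as the top factor, the composition factors permitted by the Jantzen sum appear as honest quotients of $H^0(\omega_2+\omega_{\ell-1})$, matching the bound. The subtle case is (g): the Jantzen sum permits either $L(\omega_1+\omega_\ell)$ or the full Weyl module $V(\omega_1+\omega_\ell)$ as the quotient, and I would settle this using that in characteristic $2$ with $2\mid\ell+1$ the socle of $H^0(\omega_1+\omega_\ell)$ is $L(\omega_1+\omega_\ell)$, forcing the image of $H^0(\omega_2+\omega_{\ell-1})/L(\omega_2+\omega_{\ell-1})$ inside $H^0(\omega_1+\omega_\ell)$ to be that simple submodule.

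For part (h), Weyl's dimension formula (equivalently, the characteristic-zero decomposition $V(\omega_2)\otimes V(\omega_{\ell-1})\cong V(\omega_2+\omega_{\ell-1})\oplus V(\omega_1+\omega_\ell)\oplus k$) gives $\dim V(\omega_2+\omega_{\ell-1})=\binom{\ell+1}{2}^2-(\ell+1)^2=\tfrac14(\ell-2)(\ell+2)(\ell+1)^2$; the four stated formulas then follow by subtracting the dimension of the quotient ($1$, $(\ell+1)^2-1$, $0$, or $(\ell+1)^2-2$) in each case. The main obstacle is the characteristic-$2$ analysis: the Jantzen sum then has substantially more terms whose cancellations must be tracked carefully, and the fine distinction in case (g) between $L(\omega_1+\omega_\ell)$ and the non-split extension $V(\omega_1+\omega_\ell)$ as the actual quotient requires the structural argument indicated above.
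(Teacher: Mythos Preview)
Your overall strategy---Jantzen Sum Formula plus multiplicity bounds from the good (or Weyl) filtration of $V(\omega_2)\otimes V(\omega_{\ell-1})$---is the same as the paper's, and it handles (a)--(d) and (f) correctly. But you have inverted the difficulty of cases (e) and (g), and this creates a genuine gap.

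First, your stated Jantzen sums for $p=2$ are not quite right: the contributions carry factors $v_2(\ell)$ and $v_2(\ell-1)$, so for $\ell$ even the sum is $(v_2(\ell)-1)\chi(0)$ and for $\ell$ odd it is $-\chi(0)+v_2(\ell-1)\chi(\omega_1+\omega_\ell)$. For $\ell\equiv 3\pmod 4$ one has $v_2(\ell-1)=1$, so the sum is exactly $\chi(\omega_1+\omega_\ell)-\chi(0)=\Ch L(\omega_1+\omega_\ell)$; since this is the character of a single simple module, the Jantzen filtration forces $\rad V(\lambda)\cong L(\omega_1+\omega_\ell)$ immediately. Case (g) is therefore \emph{not} subtle, and your proposed argument (``forcing the image inside $H^0(\omega_1+\omega_\ell)$'') is both unnecessary and ill-posed: a quotient of $H^0(\lambda)$ is not naturally a submodule of anything.

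The subtle case is (e). Here $v_2(\ell-1)\geq 2$, so the Jantzen sum (in simple characters) is $v_2(\ell-1)\Ch L(\omega_1+\omega_\ell)+(v_2(\ell-1)-1)\chi(0)$, which allows both $L(\omega_1+\omega_\ell)$ and $k$ to occur in $\rad V(\lambda)$ with a priori undetermined multiplicities and extension structure. The tensor-product bound gives $[V(\lambda):L(\omega_1+\omega_\ell)]\leq 1$, but you still must show that $k$ actually occurs and that the extension is nonsplit and isomorphic to $H^0(\omega_1+\omega_\ell)$ (equivalently, $H^0(\lambda)/L(\lambda)\cong V(\omega_1+\omega_\ell)$). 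The paper supplies the missing ingredient: when $\ell\equiv 1\pmod 4$, the dimension $\binom{\ell+1}{2}$ of $L(\omega_2)$ is odd, so $\End_k(L(\omega_2))$ splits as scalars plus trace-zero maps even in characteristic $2$; since $V(\lambda)$ sits inside the trace-zero summand, one gets $\Hom_G(k,V(\lambda))=0$. Combined with $\dim\Ext^1_G(k,L(\omega_1+\omega_\ell))=1$, this forces $\soc(\rad V(\lambda))=L(\omega_1+\omega_\ell)$ and pins down $\rad V(\lambda)\cong H^0(\omega_1+\omega_\ell)$. Your proposal does not address this step.
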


\begin{theorem} \label{ThmA''} Let $G$ be of type $A_4$.
\begin{enumerate}
\item[(a)] If $p=2$, then $H^0((p-1)(\omega_2+\omega_3))/L((p-1)(\omega_2+\omega_3))\cong k$.
\end{enumerate}

If $p>2$, then the following hold.
\begin{enumerate}
\item[(b)] $H^0((p-1)(\omega_2+\omega_3))/L((p-1)(\omega_2+\omega_3))\cong
L((p-2)(\omega_2+\omega_3))$.
\item[(c)] $H^0((p-2)(\omega_2+\omega_3))/L((p-2)(\omega_2+\omega_3))\cong
H^0((p-2)(\omega_1+\omega_4))$, which is simple.
\item[(d)] If $p>2$, then
\begin{multline*}
\dim L((p-1)(\omega_2+\omega_3))=\\
\frac{p(p+1)}{32}\binom{2p+2}{3}^2-\frac{p(p-1)}{32}\binom{2p}{3}^2+\frac{p}{2}\binom{p+1}{3}^2.
\end{multline*}
\item[(e)] If $p=2$, then $\dim L(\omega_2+\omega_3)=74$.
\end{enumerate}
\end{theorem}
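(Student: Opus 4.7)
The plan naturally splits according to the value of $p$. For $p=2$, both (a) and (e) are specializations of Theorem~\ref{ThmA'} to $\ell=4$: since $(p-1)(\omega_2+\omega_3)=\omega_2+\omega_{\ell-1}$ when $p=2$, and $\ell=4\equiv 0\pmod 4$ falls into case (d) of Theorem~\ref{ThmA'}, the quotient is $k$; the formula of part (h), case (d), evaluates at $\ell=4$ to $\tfrac14(256)+\tfrac12(64)-\tfrac34(16)-8-2=74$.

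For $p>2$, the simplicity of $H^0((p-2)(\omega_1+\omega_4))$ asserted in (c) follows from Theorem~\ref{ThmA} with $\ell=4$ and $r=p-2$: the required integer $m$ would have to satisfy $p+2\leq mp\leq 2p-1$, which is impossible. The remaining content of (b), (c), and (d) will be obtained by applying the Jantzen sum formula (\cite[II.8.19]{Ja}) to the Weyl modules $V((p-2)(\omega_2+\omega_3))$ and $V((p-1)(\omega_2+\omega_3))$ \emph{in that order}. Writing $\lambda+\rho$ in the standard $\epsilon$-basis of $A_4$, I will tabulate $\langle\lambda+\rho,\alpha_{ij}^\vee\rangle$ for the ten positive roots, list all pairs $(ij,m)$ with $0<mp<\langle\lambda+\rho,\alpha_{ij}^\vee\rangle$, and for each compute the affine reflection $s_{\alpha_{ij},mp}\cdot\lambda$; most of the reflected weights will have $\mu+\rho$ with a repeated coordinate, forcing $\chi=0$.

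For $\lambda=(p-2)(\omega_2+\omega_3)$ one has $\lambda+\rho=(2p,2p-1,p,1,0)$: only four $(ij,m)$-pairs pass the length condition, and after dominantising by a Weyl element only one survives, giving a Jantzen sum equal to $\chi((p-2)(\omega_1+\omega_4))$. Since this is the character of the simple module $L((p-2)(\omega_1+\omega_4))$, the first Jantzen layer $V^1=\rad V(\lambda)$ must itself be simple and isomorphic to $L((p-2)(\omega_1+\omega_4))$; contragredient duality then gives (c). For $\lambda=(p-1)(\omega_2+\omega_3)$ one has $\lambda+\rho=(2p+2,2p+1,p+1,1,0)$: exactly two contributions survive, one of them carrying a sign arising from a four-cycle, and together they yield the virtual character
\begin{equation*}
\chi((p-2)(\omega_2+\omega_3))-\chi((p-2)(\omega_1+\omega_4)).
\end{equation*}
Substituting the result of (c), which reads $\chi((p-2)(\omega_2+\omega_3))=\mathrm{ch}\,L((p-2)(\omega_2+\omega_3))+\chi((p-2)(\omega_1+\omega_4))$, collapses the Jantzen sum to $\mathrm{ch}\,L((p-2)(\omega_2+\omega_3))$, and the same radical/duality argument then yields (b).

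Finally, (d) is arithmetic. From (b) and (c) we obtain
\begin{equation*}
\dim L((p-1)(\omega_2+\omega_3))=\dim H^0((p-1)(\omega_2+\omega_3))-\dim H^0((p-2)(\omega_2+\omega_3))+\dim H^0((p-2)(\omega_1+\omega_4)),
\end{equation*}
and the Weyl dimension formula, after simplification, evaluates the three terms to $\tfrac{p(p+1)}{32}\binom{2p+2}{3}^2$, $\tfrac{p(p-1)}{32}\binom{2p}{3}^2$, and $\tfrac{p}{2}\binom{p+1}{3}^2$, respectively, giving the formula in (d). The main obstacle is the bookkeeping in the Jantzen calculation for $(p-1)(\omega_2+\omega_3)$: the virtual character initially appears to contain a negative term, and only substituting the previously established formula for $\chi((p-2)(\omega_2+\omega_3))$ makes it an honest character of a simple module, which is why the two Weyl modules must be treated in the stated order.
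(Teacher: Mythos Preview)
Your proposal is correct and follows essentially the same route as the paper: reduce (a) and (e) to Theorem~\ref{ThmA'} with $\ell=4$, deduce the simplicity of $V((p-2)(\omega_1+\omega_4))$ from Theorem~\ref{ThmA}, run the Jantzen Sum Formula first for $(p-2)(\omega_2+\omega_3)$ to get $\chi((p-2)(\omega_1+\omega_4))$ and hence (c), then for $(p-1)(\omega_2+\omega_3)$ to get $\chi((p-2)(\omega_2+\omega_3))-\chi((p-2)(\omega_1+\omega_4))=\Ch L((p-2)(\omega_2+\omega_3))$, and finally read off (d) from Weyl dimensions. The only cosmetic difference is that you shift $\lambda+\rho$ to have nonnegative entries, whereas the paper works with the centred coordinates $(p,p-1,0,1-p,-p)$ and $(p+1,p,0,-p,-p-1)$; the surviving pairs (one from $e_2-e_4$ in the first case, two from $e_1-e_5$ with $m=1,2$ in the second) and the signs agree.
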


\subsection{Some applications}\label{prankresults}

One motive for studying the simple modules in cases (B),
(D) and (A) above comes from problems about incidence matrices arising from Hermitian and orthogonal geometries. Let $q=p^t$ and let $V(q)$  be a vector space of dimension $n$ over the field $\F_q$ of $q$ elements. We assume that
$V(q)$ carries a nondegenerate quadratic form.
Let $\widehat P$ be the set of all one-dimensional subspaces
and $P$ be the set of  singular one-dimensional subspaces.
Let ${\widehat P}^*$ be the set of all $(n-1)$-dimensional subspaces in $V$
and $P^*$ the set of polar hyperplanes $\langle v\rangle^\perp$ for
$\langle v\rangle\in P$. Then we may consider various incidence
systems such as $(\widehat{P},{\widehat P}^*)$, $(P,{\widehat P}^*)$ 
and $(P,P^*)$ under the  incidence relation of inclusion.

Similarly, if $V(q^2)$ is a vector space of dimension $n$ over $\F_{q^2}$ with
a nondegenerate Hermitian form then we can  consider the incidence
relations of inclusion involving the set $\widehat P$ of all
one-dimensional spaces, the set $P$ of
singular one-dimensional subspaces, the set ${\widehat P}^*$ of
all hyperplanes and the set $P^*$ of polar hyperplanes of 
singular one-dimensional subspaces.

These incidence relations were studied by
Blokhuis and Moorhouse \cite{BM} and Moorhouse \cite{Mo}.
We can order the sets $\widehat{P}$ and ${\widehat P}^*$ so that
the incidence matrix $A_1$ of $(P,{\widehat P}^*)$ and
the incidence matrix $A_{11}$ of  $(P,P^*)$
are submatrices of the incidence matrix $A$ of 
$(\widehat{P},{\widehat P}^*)$ in the following way.
\begin{equation}
A=\begin{pmatrix}A_{11}&A_{12}\\A_{21}&A_{22}\end{pmatrix}, \qquad A_1=\begin{pmatrix}A_{11}&A_{12}\end{pmatrix}
\end{equation}

We may consider these $0$-$1$ matrices over any commutative ring with 1
and try to compute their invariants. One such invariant is
the $p$-rank, the rank of the matrix regarded as having entries in 
the field of $p$ elements. The $p$-rank of $A$ is well known from
 \cite{MM}, \cite{Sm} and \cite{GD}. 
In \cite{BM} and \cite{Mo} the $p$-ranks of  the $A_1$ were determined,
while the problem of finding the $p$-ranks of the $A_{11}$ was left open.

These $p$-ranks may be deduced from our results on simple modules;
the results for orthogonal spaces of dimension $n$ 
are consequences of our results for types $B_\ell$ with $n=2\ell+1$, 
and types $D_\ell$ with $n=2\ell$, while 
the $p$-ranks for $n$-dimensional Hermitian spaces come from our results
for type $A_\ell$ with $n=\ell+1$.

\begin{theorem}\label{Oprank} Suppose we are in the  orthogonal case with 
$\dim V(q)=n\geq 4$.
The following hold.
\begin{enumerate}
\item[(a)] Assume $p=2$. Then 
\begin{equation*}
\rank_pA_{11}=\begin{cases} 1+n^t, \quad\text{if $n$ is even,}\\
                            1+(n-1)^t,\quad\text{if $n$ is odd.}
\end{cases}
\end{equation*}
\item[(b)] Assume $p>2$. Then the $p$-rank depends on whether
there exists a positive integer $u$ such that 
\begin{equation*}
u\equiv n\pmod 2\quad\text{and}\quad n-3 \leq up\leq p+n-5.
\end{equation*}
If $u$ exists then
\begin{equation*}
\rank_pA_{11}=
1+\left[\binom{n+p-2}{n-1}-\binom{n+p-4}{n-1}-\binom{up+2}{n-1}+\binom{up}{n-1}\right]^t.
\end{equation*}
Otherwise,
\begin{equation*}
\rank_pA_{11}=
1+ \left[\binom{n+p-2}{n-1}-\binom{n+p-4}{n-1} \right]^t.
\end{equation*}
\end{enumerate}
\end{theorem}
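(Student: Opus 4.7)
My plan is to reduce the computation of $\rank_p A_{11}$ to a dimension count for images of $kG$-module maps (with $k=\overline{\F_p}$ and $G$ the ambient classical algebraic group), and then to invoke Theorems \ref{ThmB} and \ref{ThmD} to evaluate that dimension. The first step is to view $A_{11}$ as an $\F_p$-linear map $\F_p^{P^*}\to\F_p^P$ between permutation modules for the finite orthogonal group $\Orth_n(q)$; its $p$-rank equals the $\F_p$-dimension of the image, which is preserved under base change to $k$. So it suffices to compute the $k$-dimension of $\image(A_{11}\otimes_{\F_p}k)$.

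The central step is a module-theoretic description of this image. Using Steinberg's tensor product theorem together with a standard descent argument for permutation modules on the quadric, the $kG(q)$-module $k^P$ is realized as a subquotient of a $t$-fold tensor product of Frobenius twists of a single $kG$-module built from $H^0((p-1)\omega_1)$. Under this realization, the image of $A_{11}\otimes k$ decomposes as a one-dimensional trivial piece, spanned by the constant function (accounting for the ``$+1$''), together with a tensor product $N\otimes N^{(1)}\otimes\cdots\otimes N^{(t-1)}$, where $N$ is a specific $G$-subquotient of $H^0((p-1)\omega_1)$. Consequently
\begin{equation*}
\rank_p A_{11}=1+(\dim N)^t.
\end{equation*}

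It remains to identify $N$ and compute its dimension. The two cases of Theorem \ref{Oprank}(b) --- ``$u$ exists'' and ``otherwise'' --- correspond to whether a particular submodule appears in $H^0((p-1)\omega_1)$, a phenomenon governed by condition (ii) of Theorem \ref{ThmB} (for $n=2\ell+1$) and condition (a) of Theorem \ref{ThmD} (for $n=2\ell$). A direct translation of the bounds on $m$ appearing there into bounds on $u$ (via a simple index shift) yields the parity requirement $u\equiv n\pmod 2$ and the inequalities $n-3\leq up\leq p+n-5$; substitution of the Weyl-dimension formulas then produces the bracketed expressions in Theorem \ref{Oprank}(b). For $p=2$ the Jantzen sum formula degenerates, and $N$ is essentially the natural module, of dimension $n$ when the associated bilinear form is nondegenerate (even $n$) and $n-1$ when it has a one-dimensional radical (odd $n$); this yields $1+n^t$ and $1+(n-1)^t$ respectively.

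The principal obstacle is the second step: producing the precise tensor-product description of $\image(A_{11}\otimes k)$ and isolating the trivial summand that contributes the ``$+1$''. This requires a careful analysis of $k^P$ as a $kG(q)$-module and the identification of $N$ as the correct subquotient of $H^0((p-1)\omega_1)$. Once these structural results are in hand, Theorems \ref{ThmB} and \ref{ThmD} supply the required dimension formula, and the remaining work reduces to a straightforward manipulation of binomial coefficients.
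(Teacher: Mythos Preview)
Your overall architecture matches the paper's: one reduces to $\rank_p A_{11}=1+(\dim N)^t$ for a suitable module $N$ and then applies Theorems~\ref{ThmB} and~\ref{ThmD} with $r=p-1$. The gap is that you never identify $N$, and the route you propose toward the tensor decomposition is not the one the paper takes.

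The paper does \emph{not} try to realize $k^P$ as a subquotient of a twisted tensor product built from $H^0((p-1)\omega_1)$ and then read off the image of $A_{11}$ inside it. Instead it argues directly with the finite group. Since $\overline{G(q)}$ acts on $P$ with permutation rank $3$, one has $\dim\End_{k\overline{G(q)}}(k[P])=3$; writing $k[P]=k\mathbf{1}\oplus Y_P$, it follows that $\dim\End_{k\overline{G(q)}}(Y_P)=2$, and from this one deduces that $L:=\soc(Y_P)$ is \emph{simple}. A short calculation with the incidence endomorphism $\phi$ shows $(\phi-\pi_{\mathbf{1}})^2=0$, which forces $\image\phi=k\mathbf{1}\oplus L$ and produces the ``$+1$''. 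By Frobenius reciprocity $L$ is then the unique nontrivial simple $k\overline{G(q)}$-module with a nonzero fixed vector for the point stabilizer $\overline{G(q)}_P$; exhibiting such a fixed vector (the image of $v^{q-1}\in S^{q-1}(V)$, which spans the highest-weight line of $L((q-1)\omega_1)$ and on which $T(q)$ acts trivially) identifies $L\cong L((q-1)\omega_1)$. Only at this point does Steinberg's tensor product theorem enter, giving $\dim L=(\dim L((p-1)\omega_1))^t$. Thus $N=L((p-1)\omega_1)$ on the nose, not merely ``a specific subquotient'' of $H^0((p-1)\omega_1)$.

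Your alternative route---descending from $\bigotimes_i H^0((p-1)\omega_1)^{(p^i)}$ to $k^P$ and tracking $A_{11}$ through---would require you to know in advance which subquotient survives, and you give no mechanism for singling it out. The paper's rank-$3$/endomorphism-ring argument sidesteps this entirely by establishing simplicity of $L$ \emph{before} any highest-weight identification is attempted. Once $N=L((p-1)\omega_1)$ is known, the remainder of your sketch (translating the $m$-conditions into $u$-conditions via the substitution $r=p-1$ and plugging in the Weyl dimensions) is routine and agrees with the paper.
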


\begin{remark} When $n$ is even, there are two types of nondegenerate
forms, distinguished by the Witt index. However,
the $p$-rank of $A_{11}$ is the same for both types. The same was
also true for $A_1$, as shown in \cite{BM}.
\end{remark}

Our proof of Theorem~\ref{Oprank} will not include the case $n=4$, but 
the $p$-rank is well known in that case and fits our formula.


\begin{theorem}\label{Aprank} Suppose we are in the Hermitian case with
$\dim V(q^2)=n\geq 4$. There are two cases for the rank depending on the existence of a positive integer $u$ satisfying
\begin{equation*}\label{m19}
n-2 \leq up\leq p+n-4
\end{equation*}
If $u$ exists then
\begin{equation*}
\rank_pA_{11}=
    1+\left[\binom{n+p-2}{n-1}^2-\binom{n+p-3}{n-1}^2 -\binom{up+1}{n-1}^2+\binom{up}{n-1}^2\right]^t.
\end{equation*}
Otherwise,
\begin{equation*}
\rank_pA_{11}=
1+\left[\binom{n+p-2}{n-1}^2-\binom{n+p-3}{n-1}^2\right]^t.
\end{equation*}
\end{theorem}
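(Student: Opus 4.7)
The plan is to realize the $p$-rank of $A_{11}$ as $1+\dim L((q-1)(\omega_1+\omega_\ell))$, where $L$ denotes a simple module for the algebraic group $\mathbf G=\SL_n(\overline{\F_p})$ (with $\ell=n-1$), and then to evaluate this using Steinberg's tensor product theorem together with Theorem~\ref{ThmA}. Since the $p$-rank is invariant under field extension, one works over $k=\overline{\F_p}$ throughout.

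First, I would view $A_{11}$ as inducing a $\GU_n(q)$-equivariant map $\varphi\colon k[P]\to k[P^*]$ between permutation modules, and identify this map with a natural composition of evaluation and polarity maps in the rational representation theory of $\mathbf G$ equipped with the unitary Frobenius, following the framework introduced by Blokhuis--Moorhouse~\cite{BM} for the orthogonal analogue $A_1$. The next step is to show that $\image(\varphi)$ decomposes as the direct sum of the trivial module $k$ (spanned by the image of the all-ones vector) and the simple restricted module $L((q-1)(\omega_1+\omega_\ell))$, each with multiplicity one. Once this is done, the base-$p$ expansion $q-1=(p-1)(1+p+\cdots+p^{t-1})$ and Steinberg's theorem give
\begin{equation*}
\dim L((q-1)(\omega_1+\omega_\ell))=\bigl(\dim L((p-1)(\omega_1+\omega_\ell))\bigr)^t,
\end{equation*}
and Theorem~\ref{ThmA} applied with $r=p-1$ and $\ell=n-1$ produces the bracketed expression in each case of the statement. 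The existence condition $n-2\leq up\leq p+n-3$ corresponds, via the substitution $m=u+1$, to the condition $r+\ell\leq mp\leq 2r+\ell-1$ of Theorem~\ref{ThmA}(a), and the two formulas in Theorem~\ref{Aprank}(c) match cases (a) and (b) of Theorem~\ref{ThmA}(c) respectively.

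The hard part will be pinning down the simple constituents of $\image(\varphi)$: one must show both that no further simple constituents appear and that $L((q-1)(\omega_1+\omega_\ell))$ does appear with multiplicity one. This is the Hermitian analogue of the argument for $A_1$ in~\cite{BM}; it requires tracking how the permutation modules for $\GU_n(q)$ embed into modules for $\mathbf G$, and using the submodule structure of $H^0((q-1)(\omega_1+\omega_\ell))$ made available (via Steinberg) by Theorem~\ref{ThmA}. A secondary issue is the precise bookkeeping needed to confirm that the trivial summand contributes exactly $1$ to the rank and is not cancelled against the non-trivial constituent, which is handled by direct computation with the all-ones vector.
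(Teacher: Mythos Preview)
Your overall strategy is the same as the paper's: reduce $\rank_p A_{11}$ to $1+\dim L((q-1)(\omega_1+\omega_\ell))$, then apply Steinberg's tensor product theorem and Theorem~\ref{ThmA} with $r=p-1$, $\ell=n-1$. The substitution $m=u+1$ and the matching of binomial expressions are exactly right.

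The difference lies in how you propose to establish $\image\varphi\cong k\oplus L((q-1)(\omega_1+\omega_\ell))$. You plan to embed the permutation modules into rational $\mathbf G$-modules and track $\varphi$ through the submodule lattice of $H^0((q-1)(\omega_1+\omega_\ell))$, in the spirit of Blokhuis--Moorhouse for $A_1$. The paper instead stays entirely inside the finite-group permutation module $k[P]$: since $|P|\equiv 1\pmod p$ one has $k[P]=k\allone\oplus Y_P$, and because the action of ${\overline G(q)}$ on $P$ has permutation rank~$3$, the endomorphism algebra $\End_{k{\overline G(q)}}(k[P])$ is $3$-dimensional. A direct computation shows $(\phi-\pi_{\allone})^2=0$, which forces $\soc(Y_P)$ to be simple and $\image\phi=k\allone\oplus\soc(Y_P)$. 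Only \emph{afterwards} does the paper identify $\soc(Y_P)$ with $L((q-1)(\omega_1+\omega_\ell))$, via a short highest-weight/fixed-vector argument that never requires the submodule structure of $H^0$.

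Your route should work, but it is more laborious: the Blokhuis--Moorhouse machinery was designed for $A_1$ (singular points versus \emph{all} hyperplanes), where the ambient incidence map is well understood; adapting it to $A_{11}$ means controlling both domain and codomain simultaneously inside algebraic-group modules. The paper's rank-$3$ argument sidesteps all of this---it needs no information about $H^0$ beyond what is used at the very end to read off $\dim L((p-1)(\omega_1+\omega_\ell))$ from Theorem~\ref{ThmA}.
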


When $n=4$ or $5$, the totally isotropic subspaces of dimensions one and 
two form the points and lines of the Hermitian generalized quadrangle.
The $p$-rank of the incidence relation of points and lines of this 
generalized quadrangle is still unknown in general. Each generalized
quadrangle has a {\it dual}, obtained by interchanging the
roles of the points and lines, while keeping the same incidence relation.
By a polar hyperplane in the dual quadrangle, we mean
the set of dual points which are collinear (in the dual sense)
with a given dual point. 
Taking duals does not affect the $p$-rank of the point-line incidence
relation. However, the point-hyperplane incidence
relations for a quadrangle and its dual have different $p$-ranks in general.
When $n=4$, the dual Hermitian  generalized quadrangle $DH(3,q^2)$ is isomorphic to
the generalized quadrangle $Q(5,q)$ arising from the totally isotropic subspaces of a $6$-dimensional
quadratic module of Witt index $2$, so the $p$-rank of $A_{11}$ in this case is given by
Theorem~\ref{Oprank}, with $\ell=3$.
 The modules in case (A$^{\prime\prime}$) arise from considering the point-hyperplane incidences of the dual of the Hermitian generalized quadrangle for $n=5$.

\begin{theorem}\label{dualHermitian}
 The $p$-rank of the point-hyperplane incidence matrix $A_{11}$
for the dual Hermitian generalized quadrangle $DH(4,q^2)$ is
as follows.
\begin{enumerate}
\item[(a)] If $p>2$ then
\begin{multline*}
\rank_pA_{11}=
1+ \left[\ds \frac{p(p+1)}{32}\binom{2p+2}{3}^2-\frac{p(p-1)}{32}\binom{2p}{3}^2+\frac{p}{2}\binom{p+1}{3}^2\right]^t.
\end{multline*}
\item[(b)] If $p=2$ then $\rank_2A_{11}= 1+74^t$.
\end{enumerate}
\end{theorem}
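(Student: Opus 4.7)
The strategy directly parallels the derivations of Theorems~\ref{Oprank} and~\ref{Aprank}: the plan is to realize $A_{11}$ as the matrix of a $\mathrm{GU}_5(q)$-equivariant incidence map on the permutation module over totally isotropic $2$-spaces of the $5$-dimensional Hermitian space, determine the composition factors of its image using Theorem~\ref{ThmA''}, and finally pass from the algebraic group to $\mathrm{GU}_5(q)$ via Steinberg's tensor product theorem.

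First I would let $X$ be the set of totally isotropic $2$-spaces, and consider the $\mathrm{GU}_5(q)$-equivariant endomorphism $\eta\colon k[X]\to k[X]$ sending $[L]$ to $\sum_{L'}[L']$, where the sum runs over those $L'\in X$ with $L=L'$ or $L\cap L'\neq 0$. Then $\rank_p A_{11}=\dim_k \image(\eta)$ after base change to $k=\bar{\F}_p$. Following the framework of \cite{BM} and \cite{Mo}, I would identify $k[X]$ (or its relevant subquotient containing $\image(\eta)$) with a twisted tensor product of restrictions of $\mathrm{GL}_5$-modules to the unitary group, with the two fundamental weights $\omega_2$ and $\omega_3$ pairing under the Hermitian Frobenius.

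The image of $\eta$ then contains the trivial module spanned by the all-ones vector and, on its complement, a unique simple head coming from the Weyl-module quotient analyzed in Theorem~\ref{ThmA''}. By Steinberg's tensor product theorem for unitary groups, this head is the simple $\mathrm{GU}_5(q)$-module
\begin{equation*}
\bigotimes_{i=0}^{t-1} L\bigl((p-1)(\omega_2+\omega_3)\bigr)^{(p^i)},
\end{equation*}
whose dimension is $[\dim L((p-1)(\omega_2+\omega_3))]^t$. Substituting the values from Theorem~\ref{ThmA''}(d) when $p>2$ and Theorem~\ref{ThmA''}(e) when $p=2$ yields parts (a) and (b) respectively.

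The main obstacle is verifying that $\image(\eta)$ is exactly this trivial summand plus the single simple tensor product above, with no further contributions. For $p>2$ this means ruling out the next composition factor $L((p-2)(\omega_2+\omega_3))$ of $H^0((p-1)(\omega_2+\omega_3))$ identified in Theorem~\ref{ThmA''}(b) (and transitively, its own composition factors from Theorem~\ref{ThmA''}(c)); for $p=2$ one uses that the only other composition factor of $H^0$ is trivial, by Theorem~\ref{ThmA''}(a). The argument would proceed by analyzing $\Ker(\eta)$ via the unique maximal submodule structure provided by Theorem~\ref{ThmA''}, combined with a fixed-point or Brauer-character count on isotypic components, mirroring the analogous step for the Hermitian case of Theorem~\ref{Aprank} treated in \cite{BM, Mo}.
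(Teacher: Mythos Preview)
Your overall architecture is right: realize $A_{11}$ as a $G(q)$-equivariant endomorphism of the permutation module $k[P]$ on totally isotropic $2$-spaces, show its image is $k\allone\oplus L$ for a single simple module $L$, identify $L\cong L((q-1)(\omega_2+\omega_3))$, and then apply Steinberg's tensor product theorem together with Theorem~\ref{ThmA''}. This is exactly the paper's route.

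The gap is precisely the step you flag as ``the main obstacle,'' and your proposed resolution is both vague and harder than necessary. You suggest ruling out the extra composition factor $L((p-2)(\omega_2+\omega_3))$ of $H^0((p-1)(\omega_2+\omega_3))$ by some analysis of $\Ker(\eta)$ and Brauer characters. But the paper never needs to touch the internal structure of $H^0$ for this step, and it is not clear your approach would succeed: the Weyl-module filtration lives on the algebraic group side, while $\image(\eta)$ is a $kG(q)$-submodule of a permutation module, and linking the two directly is exactly what you have not done.

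The paper's argument bypasses this entirely with two short observations. First, $G(q)$ acts on $P$ with permutation rank $3$, so $\dim\End_{kG(q)}(k[P])=3$ and hence $\dim\End_{kG(q)}(Y_P)=2$; this already forces $\soc(Y_P)$ to be simple (if it had $m$ summands one gets $m+1$ independent endomorphisms). Second, a direct count shows $(\eta-\pi_{\allone})^2=0$, because the coefficient of $[L'']$ in $\eta^2([L])$ is $\abs{L^\perp\cap (L'')^\perp\cap P}\equiv 1\pmod p$. Since the only possible images of a nonzero endomorphism are $k\allone$, $L$, $Y_P$, $k[P]$, or $k\allone\oplus L$, and $\eta\neq\pi_{\allone}$ but $\eta^2=\pi_{\allone}$, one gets $\image(\eta)=k\allone\oplus L$ with no further analysis of composition factors. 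The identification $L\cong L((q-1)(\omega_2+\omega_3))$ then follows from Frobenius reciprocity: $L$ is the unique nontrivial simple with a $G(q)_P$-fixed vector, and one checks directly that the highest weight line of $L((q-1)(\omega_2+\omega_3))$ is $G(q)_P$-fixed. You should replace your last two paragraphs with this rank-$3$/nilpotency argument.
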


The reduction of the $p$-rank problem to simple modules is achieved by
reformulating it in terms of representations of the finite classical
group of automorphisms of the given form. This is explained in
\S\ref{prankproofs}.

The simple modules in case (A$^\prime$) were brought to our attention by
Pham Huu Tiep, who asked about their dimensions as well as those 
in (B), (D) and (A) in connection with work  on heights of characters in 
blocks \cite{Tiep}. A. Kleschchev has informed us that Theorem~\ref{ThmA'}
is included among the results in the Ph.D. thesis (1992, Moscow State University) 
of A. Adamovich, where the Weyl modules with highest weight equal to the sum of two fundamental weights are treated. To our knowledge there is no published proof,
but the statement is reproduced in \cite{Kles}.



\section{Semisimple algebraic groups}\label{alggps}
We shall use the standard reference
\cite{Ja} for notation and background results 
in our discussion of algebraic groups and their representations.
Let $G$ be a simply connected, semisimple algebraic group over an algebraically
closed field $k$ of characteristic $p>0$. 
Let $T$ be a maximal torus in $G$ and let $\ell$ denote its rank. Let
$(X(T), R, Y(T), R^\vee)$ be the {\it root datum} determined by $G$ and $T$
and let $W$ be its Weyl group.
Here $X(T)=\Hom(T,k^\times)$ is the group of rational characters of $T$,
$R\subset X(T)$ is the set of roots, $Y(T)=\Hom(k^\times, T)$ and 
$R^\vee\subset Y(T)$ is the set of coroots. There is a natural pairing
$X(T)\times Y(T)\to \Z$ such that for $\lambda\in X(T)$ and $\phi\in Y(T)$,
$\langle \lambda,\phi\rangle$ is the integer such that the endomorphism
of $k^\times$ given by $\lambda\circ\phi$ is the map $a\mapsto a^{\langle \lambda,\phi\rangle}$.
Let $R^+$ be a positive system in $R$ and $S=\{\alpha_1,\ldots,\alpha_\ell\}$ the corresponding set of simple roots. Let  $B$ be a Borel subgroup containing 
$T$ which corresponds to the set $-R^+$ of negative 
roots. 
The simply connectedness of $G$ means that $Y(T)=\Z R^\vee$.
Then $X:=X(T)$ has a basis consisting of the  fundamental weights 
$\omega_1$, \dots, $\omega_\ell$, defined by
$\langle \omega_i,\alpha^\vee_j\rangle=\delta_{ij}$ for $1\leq i,j\leq \ell$.
 The weights in the set
$$
X_+=\{\lambda\in X\mid \langle\lambda,\alpha^\vee\rangle\geq 0, \forall \alpha\in R^+\}
$$
are called {\it dominant} weights. They are the nonnegative integer combinations of
the fundamental weights. As usual, the half-sum of the positive roots is denoted by $\rho$.
The real vector space $E=X\otimes_\Z\R$ can be given an inner product $\langle-,-\rangle$
such that the set $R$ becomes a root system in the sense of \cite[2.1]{Ca}.
Then $Y(T)$ can be identified with the sublattice of $E$ which
is dual to $X$ and for each root $\alpha$, its coroot $\alpha^\vee$ is identified with the element $2\alpha/\langle\alpha,\alpha\rangle$.

\section{Weyl modules} 
The set $X_+$ parametrizes the simple $G(k)$-modules, but also
two other sets of modules, the Weyl modules $V(\lambda)$
and the induced modules $H^0(\lambda)$. The induced modules are also called dual
Weyl modules because each $H^0(\lambda)$ is the dual module of
$V(\lambda^*)$, where $\lambda^*=-w_0(\lambda)$
and $w_0$ is the longest element of $W$. In the case of the types $B_\ell$ and $D_\ell$,
we actually have $\lambda^*=\lambda$.
The induced modules $H^0(\lambda)$ have the property that
they have a unique simple submodule, and this simple module
is $L(\lambda)$. The Weyl modules and induced modules  can be defined
for Chevalley groups over any algebraically closed field. 
Their weight multiplicities are independent of the field characteristic
and are given by Weyl's Character Formula.
The modules $V(\lambda)$ and $H^0(\lambda)$ have the same formal characters.
In characteristic 0 they are irreducible.
In the case of the types $B_\ell$ and $D_\ell$, it 
is easy to see that $H^0(\omega_1)$ is the standard orthogonal
module, which we denote by $V$. It is  not hard to check
using Weyl's dimension formula that for $r\geq 1$, $H^0(r\omega_1)$
is the quotient $S^r(V^*)/fS^{r-2}(V^*)$, where
$S^m(V^*)$ is the space of homogeneous polynomials of degree $r$
on $V$  and $f\in S^2(V^*)$ is the quadratic form defining the orthogonal group.

\subsection{Weyl module dimensions}\label{Weyldimensions}
Here we give the dimensions of the Weyl modules for the weights (B), (D),
(A), (A$^\prime$) and (A$^{\prime\prime}$) of the Introduction.
For type $B_\ell$, $\ell\ge 2$, we have
\begin{equation}
\dim H^0(r\omega_1)=\binom{2\ell+r}{2\ell}-\binom{2\ell+r-2}{2\ell}
\end{equation}
and for type $D_\ell$, $\ell\ge 3$, we have
\begin{equation}
\dim H^0(r\omega_1)=\binom{2\ell+r-1}{2\ell-1}-\binom{2\ell+r-3}{2\ell-1}.
\end{equation}
For type $A_\ell$, $\ell\ge 3$, Weyl's formula gives
\begin{equation}
\dim H^0(r(\omega_1+\omega_\ell))=\binom{r+\ell}{\ell}^2-\binom{r+\ell-1}{\ell}^2,
\end{equation}
and for $\ell\geq 4$,
\begin{multline}
\dim H^0(r(\omega_2+\omega_{\ell-1}))=\\
\binom{r+\ell-2}{r+1}^2\binom{r+\ell-3}{r}^2\frac{(2r+\ell)(2r+\ell-1)^2(2r+\ell-2)}{\ell(\ell-1)^2(\ell-2)^3}.
\end{multline}

\section{Jantzen Sum Formula}
The Weyl module $V(\lambda)$ has a descending  filtration, defined by Jantzen,
of submodules $V(\lambda)^i$, $i>0$. These submodules are
a little mysterious; they are eventually all zero, but it can
happen that  nonzero terms can be equal. One good property
is that 
\begin{equation}
V(\lambda)^1=\rad V(\lambda),\quad\text{so}\quad V(\lambda)/V(\lambda)^1
\cong L(\lambda).
\end{equation}
Another good property is that we have a formula for the character
of the (external) direct sum of the $V(\lambda)^i$, the {\it Jantzen Sum Formula}:

\begin{equation}\label{jsum}
\sum_{i>0}\Ch( V(\lambda)^i)= -\sum_{\alpha>0}
\sum_{0<mp<\dprod{\lambda+\rho}{\alpha^\vee}}v_p(mp)\chi(\lambda-mp\alpha)
\end{equation}
The left hand side  is the sum of the characters of the $V(\lambda)^i$.
The sign of a Weyl group element is its determinant as a transformation of $E$.
The number $v_p(n)$ is the exponent of $p$ in a prime power 
factorization of $n$.
Next we explain the meaning of $\chi(\mu)$,
for $\mu\in X$. If $\mu\in X_+$ then $\chi(\mu)$ is the character
of $V(\mu)$ (and $H^0(\mu)$), given by Weyl's Character Formula. 
For general $\mu$, there is 
a unique Weyl conjugate $\mu'$ of $\mu+\rho$ in $X_+$. Then $\mu'-\rho$
may or may not be in $X_+$. If not then $\chi(\mu)=0$. If so, then
$\chi(\mu)=\sign(w)\chi(\mu'-\rho)$, where $w\in W$ satisfies $w(\mu+\rho)=\mu'$.

Since the characters $\chi(\mu)$ for $\mu\in X_+$ are linearly independent,
the Sum Formula will tell us whether $V(\lambda)$ is simple or not.
In the case that is not simple, the formula gives incomplete
information about the composition factors, because the LHS of (\ref{jsum})
{\it overestimates} the actual weight multiplicities. Nevertheless, 
there are certain situations where this information is very useful.

The right hand side can be computed by the following  algorithm: 
For each positive root $\alpha$, 
\begin{enumerate}
\item[(i)] Compute $\dprod{\lambda+\rho}{\alpha^\vee}$
\item[(ii)]Compute $\lambda+\rho-mp\alpha$ for 
 $0<m<\dprod{\lambda+\rho}{\alpha^\vee}$
\item[(iii)] Find the Weyl group  conjugate $w(\lambda+\rho-mp\alpha)$ in 
$X_+$ and note the sign of a Weyl group element $w$.
\item[(iv)] Compute $w(\lambda+\rho-mp\alpha)-\rho$.
\item[(v)] The contribution to the sum (\ref{jsum}) is 
$-\sign(w)v_p(mp)\chi(w(\lambda+\rho-mp\alpha)-\rho)$. This will
be zero if $w(\lambda+\rho-mp\alpha)-\rho$ is not in the dominant
Weyl chamber, and is otherwise given by Weyl's Character Formula.
\end{enumerate}



\section{Weyl groups and root systems of types $B_\ell$, $D_\ell$ and $A_\ell$}
For types $B_\ell$ and $D_\ell$ let $e_i$, $i=1$,\dots, $\ell$ be an orthonormal basis of
$E=\R\otimes_\Z X$. Then the inner product of $E$ becomes the usual dot product.

\subsection{Type $B_\ell$} The root system $R$
can be taken to consist of the vectors $\pm e_i\pm e_j$, $1\leq i< j\leq \ell$, together with the vectors $\pm e_i$, $1\leq i\leq \ell$.
The roots $\pm e_i\pm e_j$ are equal to their coroots, but
for $\alpha=e_i$ we have $\alpha^\vee=2e_i$.
The Weyl group consists of all possible permutations of the indices
and all possible sign changes of the $e_i$, so has order $\ell!2^\ell$.
A fundamental system of roots is
\begin{equation*}
S=\{\alpha_i= e_i-e_{i+1}, (1\leq i\leq \ell-1),\quad \alpha_\ell=e_{\ell}\}
\end{equation*} 
The fundamental weights are
\begin{equation*}
\omega_i=(\underbrace{1,1,\ldots, 1}_i,0,\ldots 0)\ \text{for
$1\leq i\leq \ell-1$,}\quad  \omega_\ell=(\frac12,\frac12,\ldots,\frac12)
\end{equation*}
and we have
\begin{equation*}
\rho=(\ell-\frac12,\ell-\frac32,\ldots,\frac12).
\end{equation*}
Let $\mu=(a_1,\ldots,a_\ell)\in X$. Then
\begin{equation*}
\mu\in X_+ \iff a_1\geq a_2\geq\cdots\geq a_\ell\geq 0.
\end{equation*}

\subsection{Type $D_\ell$} 
Here $R=R^\vee$ may be
taken to consist of the vectors $\pm e_i\pm e_j$, $1\leq i< j\leq \ell$.
Roots and coroots are the same in this case.
The Weyl group consists of all possible permutations of the indices
and sign changes of an even number of the $e_i$, so has order $\ell!2^{\ell-1}$.
A fundamental system of roots is
\begin{equation*}
S=\{\alpha_i= e_i-e_{i+1}, (1\leq i\leq \ell-1),\quad \alpha_\ell=e_{\ell-1}+ e_{\ell}\}
\end{equation*} 
The fundamental weights 
are
\begin{multline*}
\omega_i=(\underbrace{1,1,\ldots, 1}_i,0,\ldots 0)\ \text{for
$1\leq i\leq \ell-2$,}\\
\omega_{\ell-1}=(\frac12,\ldots,\frac12,-\frac12), \ 
\omega_\ell=(\frac12,\ldots,\frac12,\frac12)
\end{multline*}
and we have
\begin{equation*}
 \rho=(\ell-1,\ell-2,\ldots, 1, 0).
\end{equation*}
Let $\mu=(a_1,\ldots,a_\ell)\in X$. Then
\begin{equation*}
\mu\in X_+ \iff \text{$a_1\geq a_2\geq\cdots\geq a_\ell$, $a_i\geq 0$ for $1\leq i\leq \ell-1$, and $a_{\ell-1}+a_\ell \geq 0$.}
\end{equation*}

\subsection{Type $A_\ell$}
We let $e_1$, \dots, $e_{\ell+1}$, be
an orthonormal basis of an $(\ell+1)$ dimensional Euclidean space
and we identify $E$ with the hyperplane perpendicular to $e_1+e_2+\cdots+e_{\ell+1}$.
We may take $R=R^\vee$ to be the set of vectors $e_i-e_j$, $1\leq i,j\leq \ell+1$, $i\neq j$. The Weyl group is the set of all permutations of the $\ell+1$ indices.
A fundamental system is
\begin{equation*}
S=\{\alpha_i= e_i-e_{i+1}, (1\leq i\leq \ell)\}
\end{equation*} 

The fundamental dominant weights  $\omega_i$  for $1\leq i\leq\ell$ 
are
\begin{equation*}
\omega_i=\frac{1}{\ell+1}(\underbrace{\ell-i+1,\ldots, \ell-i+1}_i,-i,\ldots -i)
\end{equation*}
and we have
\begin{equation*}
 \rho=(\frac{\ell}{2},\frac{\ell}{2}-1,\ldots,1-\frac{\ell}{2},-\frac{\ell}{2}).
\end{equation*}
Let $\mu=(a_1,\ldots,a_\ell)\in X$. Then
\begin{equation*}
\mu\in X_+ \iff a_1\geq a_2\geq\cdots\geq a_\ell.
\end{equation*}
\subsection{A shortcut} Suppose $R$ is of type $B_\ell$ or $D_\ell$.
After computing $\lambda+\rho-mp\alpha$ in step (ii) we may
see that its coordinate vector has two entries with the same absolute
values. Then the same will hold for its conjugate in $X_+$. But then since
the coordinates of $\rho$ are strictly decreasing and nonnegative, we see that
subtracting $\rho$ from this conjugate will take us out of $X_+$.
Similarly, if $R$ is of type $A_\ell$, if $\lambda+\rho-mp\alpha$
has two equal coordinates, then subtracting $\rho$
will result in a weight outside $X_+$.

\begin{lemma}\label{critstar}\ 
\begin{enumerate}
\item[(a)] If $R$ is of type $B_\ell$ or $D_\ell$ and
 $\lambda+\rho-mp\alpha$ has two coordinates  with the same absolute value
then  the pair $(\alpha, m)$ contributes nothing to the final sum.
\item[(b)] If $R$ is of type $A_\ell$
and  $\lambda+\rho-mp\alpha$ has two equal coordinates,
then  the pair $(\alpha, m)$ contributes nothing to the final sum
(\ref{jsum}).
\end{enumerate}
\end{lemma}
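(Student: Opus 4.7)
The plan is to work directly from the definition of $\chi(\mu)$ recalled just before the Sum Formula: one conjugates $\mu+\rho$ to its unique representative $\mu'\in X_+$, and $\chi(\mu)=0$ precisely when $\mu'-\rho\notin X_+$. Setting $\mu=\lambda-mp\alpha$ so that $\mu+\rho=\lambda+\rho-mp\alpha$, it suffices to exhibit two consecutive coordinates $b_i=b_{i+1}$ of $\mu'=(b_1,\ldots,b_\ell)$. Since in all types considered each simple root satisfies $\rho_i-\rho_{i+1}=1$ (including $\rho_{\ell-1}-\rho_\ell=1$ in type $D_\ell$), subtracting $\rho$ then forces $(\mu'-\rho)_i=(\mu'-\rho)_{i+1}-1$, which fails the dominance condition.

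For part (a), the Weyl groups of types $B_\ell$ and $D_\ell$ act by signed permutations of coordinates and therefore preserve the multiset of absolute values. Hence if $\lambda+\rho-mp\alpha$ has two coordinates with the same absolute value, so does $\mu'$. In type $B_\ell$ the $b_i$ are nonnegative and weakly decreasing, so the two coordinates in question are equal, and the chain $b_i\ge b_{i+1}\ge\cdots\ge b_j$ with $b_i=b_j$ produces the required $b_r=b_{r+1}$ somewhere in between. Type $D_\ell$ is the same whenever the two coordinates lie among $b_1,\ldots,b_{\ell-1}$; when one of them is $b_\ell$ and is negative, the inequality $b_{\ell-1}\ge |b_\ell|$ combined with the chain $b_1\ge\cdots\ge b_{\ell-1}$ still pins down a consecutive equality strictly above the last coordinate. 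In the only remaining borderline situation $b_{\ell-1}=b_\ell$, one checks that the auxiliary constraint $(\mu'-\rho)_{\ell-1}+(\mu'-\rho)_\ell\ge 0$ is incompatible with the forced relation $(\mu'-\rho)_{\ell-1}=(\mu'-\rho)_\ell-1$.

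For part (b), the Weyl group of type $A_\ell$ acts purely by coordinate permutations, so two equal coordinates of $\lambda+\rho-mp\alpha$ yield two equal coordinates of $\mu'$. The weakly-decreasing ordering on $X_+$ then produces a consecutive equality $b_i=b_{i+1}$, and the general $\rho_i-\rho_{i+1}=1$ argument finishes the proof.

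The only genuine obstacle is the type-$D_\ell$ bookkeeping involving a possibly negative last coordinate, which demands the brief case split outlined above; every other case follows immediately from the invariance of coordinate multisets under the Weyl group together with the strict monotonicity of $\rho$.
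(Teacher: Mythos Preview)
Your approach is correct and is exactly the paper's argument (the paper offers only the short paragraph preceding the lemma as justification). There is, however, a small mix-up in your $D_\ell$ case split that you should fix.

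The case $b_{\ell-1}=b_\ell$ is not a separate ``borderline'' case at all: it is already covered by your general consecutive-equality argument, since $\rho_{\ell-1}-\rho_\ell=1$ holds in type $D_\ell$ too, forcing $(\mu'-\rho)_{\ell-1}<(\mu'-\rho)_\ell$ and hence failure of the condition $a_{\ell-1}\ge a_\ell$. Your stated reason---incompatibility with $(\mu'-\rho)_{\ell-1}+(\mu'-\rho)_\ell\ge 0$---is simply false (take $b_{\ell-1}=b_\ell=5$). The genuine exceptional case you should have isolated is $|b_i|=|b_\ell|$ with $i=\ell-1$ and $b_\ell<0$, i.e.\ $b_{\ell-1}=-b_\ell>0$; here there is no consecutive equality among the $b_s$. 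But then $b_{\ell-1}+b_\ell=0$, so $(\mu'-\rho)_{\ell-1}+(\mu'-\rho)_\ell=(b_{\ell-1}-1)+b_\ell=-1<0$, and it is precisely the constraint $a_{\ell-1}+a_\ell\ge 0$ that fails. With this relabelling your argument is complete.
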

\qed



\section {Computations for $B_\ell$}
\subsection{Evaluation of the Sum Formula}
We fix $r$ with $0\leq r\leq p-1$ and set $\lambda=r\omega_1=(r,0,\ldots,0)$.
Here, we determine the contribution of each positive root to the right hand side
of (\ref{jsum}).
We have $\rho=(\ell-\frac12,\ell-\frac32,\ldots,\frac12)$, so
\begin{equation*} 
\lambda+\rho=(r+\ell-\frac12,\ell-\frac32,\ldots,\underbrace{\ell-i+\frac12}_i,\ldots,\frac12).
\end{equation*}
\subsubsection{$\alpha=e_i-e_j$, $1<i<j$}
We have $\dprod{\lambda+\rho}{\alpha^\vee}=j-i$.
So we must consider $m$ with 
\begin{equation}\label{m1}
0<mp<j-i.
\end{equation}
Consider
\begin{equation*}
(\nu_s)_{s=1}^\ell:=\lambda+\rho-mp\alpha=(\ldots,\underbrace{\ell-i+1/2-mp}_i,\ldots)
\end{equation*}
Since
\begin{equation*}
\ell-i+\frac12>\ell-i+\frac12-mp>\ell-i+\frac{1}{2}-(j-i)=\ell-j+\frac12,
\end{equation*}
It follows that $\nu_i=\nu_s$ for some $s$ with $i<s<j$, so Lemma~\ref{critstar} applies.
The total contribution to (\ref{jsum}) from positive roots of this form is 
zero.
\subsubsection{$\alpha=e_i+e_j$, $1<i<j$}
We have $\dprod{\lambda+\rho}{\alpha^\vee}=2\ell-i-j+1$.
So we must consider $m$ with 
\begin{equation}\label{m2}
0<mp<2\ell-i-j+1.
\end{equation}
Consider
\begin{equation*}
(\nu_s)_{s=1}^\ell:=\lambda+\rho-mp\alpha=(\ldots,\underbrace{\ell-i+1/2-mp}_i,\ldots,
\underbrace{\ell-j+1/2-mp}_j,\ldots).
\end{equation*}
If $\ell-i+1/2-mp<0$ then 
\begin{equation*}
\abs{\ell-i+1/2-mp}=mp-(\ell-i+\frac12)< (2\ell-i-j+1)-(\ell-i+\frac12)
=\ell-j+\frac12,
\end{equation*}
so $\abs{\nu_i}=\abs{\nu_s}$ for some $s>j$, and Lemma~\ref{critstar} applies.
So assume $\ell-i+\frac12-mp>0$. Then $\nu_i=\nu_s$ for some
$s>i$, unless $\ell-i+\frac12-mp=\ell-j+\frac12$, i.e $mp=j-i$.
Then 
\begin{equation*}
\abs{\nu_j}=\abs{\ell-j+\frac12-mp}<\abs{\ell-j+\frac12}+(j-i)
=\ell-i+1,
\end{equation*}
so $\abs{\nu_j}=\nu_s$ for some $s\leq i$ and Lemma~\ref{critstar} applies.
The total contribution to (\ref{jsum}) from positive roots of this form is
zero.
\subsubsection{$\alpha=e_i$, $1<i$}
We have $\dprod{\lambda+\rho}{\alpha^\vee}=2\ell-2i+1$.
So we must consider $m$ with 
\begin{equation}\label{m3}
0<mp<2\ell-2i+1.
\end{equation}
Consider
\begin{equation*}
(\nu_s)_{s=1}^\ell:=\lambda+\rho-mp\alpha=
(\ldots,\underbrace{\ell-i+1/2-mp}_i,\ldots).
\end{equation*}
Since $\abs{\ell-i+\frac12-mp}<\ell-i+\frac12$, by (\ref{m3}),
we have $\abs{\nu_i}=\nu_j$ for some $j>i$, so Lemma~\ref{critstar} applies.
The total contribution to (\ref{jsum}) from positive roots of this form is
zero.

\subsubsection{$\alpha=e_1-e_j$, $j>1$}
We have $\dprod{\lambda+\rho}{\alpha^\vee}=r+j-1$.
So we must consider $m$ with 
\begin{equation}\label{m4}
0<mp<r+j-1.
\end{equation}
Consider
\begin{equation*}
(\nu_s)_{s=1}^\ell:=\lambda+\rho-mp\alpha=
(r+\ell-\frac12-mp,\ell-\frac32,\ldots,\underbrace{\ell-j+\frac12+mp}_j,\ldots).
\end{equation*}
By (\ref{m4}) 
\begin{equation*}
\ell-\frac12>r+\ell-\frac12-mp>r+\ell-\frac12-r-j+1=\ell-j+\frac12.
\end{equation*}
Thus $\nu_1=\nu_s$ for some $s$ with $1<s<j$ and Lemma~\ref{critstar} applies.
The total contribution to (\ref{jsum}) from positive roots of this form is 
zero.

\subsubsection{$\alpha=e_1$}
We have $\dprod{\lambda+\rho}{\alpha^\vee}=2r+2\ell-1$.
So we must consider $m$ with 
\begin{equation}\label{m5}
0<mp<2r+2\ell-1.
\end{equation}
Consider
\begin{equation*}
(\nu_s)_{s=1}^\ell:=\lambda+\rho-mp\alpha=
(r+\ell-\frac12-mp,\ell-\frac32,\ldots).
\end{equation*}
Then $\nu_1$ will have the same size as some $\nu_i$, $i>1$ if
$r+\ell-\frac12-mp>0$, and also if
$r+\ell-\frac12-mp<0$ but $mp-r-\ell+\frac12< \ell-\frac12$.
So we may assume $mp-r-\ell+\frac12 \geq \ell-\frac12$, hence
$mp\geq r+2\ell-1$.
Thus, the only $m$ we have to consider is the unique $m$
(if it exists) which satisfies
\begin{equation}\label{dm1}
r+2\ell-1\leq mp<2r+2\ell-1.
\end{equation}
Assuming $m$ exists, the conjugate of $\lambda+\rho-mp\alpha$
in $X_+$ is 
\begin{equation*}
(mp-r-\ell+\frac12,\ell-\frac32,\ldots,\frac12),
\end{equation*}
obtained from $\lambda+\rho-mp\alpha$ by changing the sign of $\nu_1$,
so has sign $-1$.
Subtracting $\rho$ gives
\begin{equation*}
(mp-r-2\ell+1,0,\ldots,0)=(mp-2\ell+1-r)\omega_1.
\end{equation*}
The total contribution to (\ref{jsum}) from $e_1$ is 
therefore zero unless there exists $m$ satisfying (\ref{dm1}),
in which case $m$ is unique and the contribution to
(\ref{jsum}) is equal to $v_p(mp)\chi(r_1\omega_1)$,
where $r_1=mp-r-2\ell+1$.

\subsubsection{$\alpha=e_1+e_j$, $j>1$}
We have $\dprod{\lambda+\rho}{\alpha^\vee}=r+2\ell-j$.
So we must consider $m$ with 
\begin{equation}\label{m6}
0<mp<r+2\ell-j.
\end{equation}
Consider
\begin{equation*}
(\nu_s)_{s=1}^\ell:=\lambda+\rho-mp\alpha=
(r+\ell-\frac12-mp,\ell-\frac32,\ldots,\underbrace{\ell-j+\frac12-mp}_j,\ldots).
\end{equation*}
If $r+\ell-\frac12-mp<0$ then by (\ref{m6})
\begin{equation*}
\abs{r+\ell-\frac12-mp}=mp-(r+\ell-\frac12)<\ell-j+\frac12,
\end{equation*}
so $\abs{\nu_1}=\nu_s$ for some $s>j$.
So we may assume $r+\ell-\frac12-mp > 0$. Then $\nu_1$ equals
some $\nu_s$ for $s>1$ unless $r+\ell-\frac12-mp=\ell-j+\frac12$, i.e.
\begin{equation}\label{jm1}
j=mp-r+1.
\end{equation}
Next we consider $\nu_j$. Clearly if $\nu_j>0$, then $\nu_j=\nu_s$
for some $s>j$, so we can assume $\nu_j<0$.
Next if $\abs{\nu_j}=mp-(\ell-j+\frac12)\leq\ell-\frac32$, we
have $\abs{\nu_j}=\nu_s$ for some $s$, by the pigeon-hole principle.
Thus, we may also assume that $mp-(\ell-j+\frac12)\geq\ell-\frac12$,
i.e $mp\geq 2\ell-j$. We have shown that by Lemma~\ref{critstar}, a pair
$(m,j)$ contributes nothing to (\ref{jsum}), except when
\begin{equation}\label{jm2}
2\ell-j\leq mp<r+2\ell-j,  \quad \text{and $j=mp-r+1$}.
\end{equation}
Thus, if $m$ exists, it is the unique integer satisfying
\begin{equation}\label{mp}
2\ell-1+r\leq 2mp<2r+2\ell-1.
\end{equation}
 For this $(m,j)$, we have 
\begin{equation*}
\lambda+\rho-mp\alpha=(\ell-j+\frac12,\ldots,\underbrace{\ell-2mp+r-\frac12}_j,\ldots).
\end{equation*}
The Weyl group element mapping this element into $X_+$ is the transposition
of $1$ with $j$ followed by the sign change in the first coordinate. This
group element has sign 1. On subtracting $\rho$ from the result we end up with
\begin{equation*}
(2mp-2\ell+1-r, 0,\ldots,0)=(2mp-2\ell+1-r)\omega_1.
\end{equation*}

The total contribution to (\ref{jsum}) from positive roots of this form is 
zero unless there exists $m$ satisfying (\ref{mp}) in which case
the contribution is $-v_p(mp)\chi(r_1\omega_1)$, where
$r_1=2mp-2\ell+1-r$.

\subsubsection{}\label{summaryB}
By considering the above cases, we see that the sum (\ref{jsum}) is zero
unless there exists an integer $M$ with 
\begin{equation*}
r+2\ell-1\leq Mp<2r+2\ell-1.
\end{equation*}
In the latter case, let $r_1=Mp-2\ell+1-r$.
Then there is a contribution of $v_p(Mp)\chi(r_1\omega_1)$
from the root $e_1$ and when $M$ is even there is a contribution
of $-v_p(\frac{M}{2}p)\chi(r_1\omega_1)$ from the root $e_1+e_j$,
where $j=\frac{M}{2}p-r+1$. This means that if $M$ exists 
the value of (\ref{jsum})  for $\lambda=r\omega_1$ is:
\begin{equation*}
\sum_{i>0}\Ch V(\lambda)^i=\begin{cases} v_p(Mp)\chi(r_1\omega_1),\quad\text{if $M$ is odd.}\\
\chi(r_1\omega_1),\quad\text{if $M$ is even and  $p=2$.}\\
0,\quad\text{if $M$ is even and $p$ is odd.}
\end{cases}
\end{equation*}

\subsection{Weyl module structures for type $B_\ell$}\label{submoduleB}
The above computations show that 
$\chi(r\omega_1)$ is  equal to either  $\Ch L(r\omega_1)$
or to $\Ch L(r\omega_1)+e\chi(r_1\omega_1)$, for a certain $r_1$
with $0\leq r_1<r$ 
and some positive integer $e$. 
We note that if we repeat the process setting $\lambda=r_1\omega_1$,
and denote the parameters (corresponding to $m$ and $r_1$) 
arising  from any nonsimple case by $m_1$ and $r_2$, then we see
that $r_2\equiv r\pmod p$. Since we assume $r\leq p-1$, this is impossible.
This shows that $V(r_1\omega_1)$ is irreducible in all cases
where $V(r\omega_1)$ is not.
Thus, $\rad V(r\omega_1)$ is either zero
or else all of its composition factors are isomorphic to a simple
Weyl module $V(r_1\omega_1)$.
Since there are no self-extensions of simple modules (\cite[II.2.12]{Ja})
we see that $\rad V(r\omega_1)$ is either zero or a direct
sum of $e$ copies of the simple Weyl module $V(r_1\omega_1)$

\begin{lemma}\label{multfree} 
$V(r\omega_1)$ is either simple or else $e=1$.
\end{lemma}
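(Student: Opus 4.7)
The plan is to bound $e \leq 1$. From the preceding analysis, $\rad V(r\omega_1) \cong V(r_1\omega_1)^{\oplus e}$ with $V(r_1\omega_1)=L(r_1\omega_1)=H^0(r_1\omega_1)$ simple, and the Jantzen sum formula together with the inclusion $\rad V(r\omega_1)=V(r\omega_1)^1\subseteq\bigoplus_i V(r\omega_1)^i$ already yields $e\leq v_p(Mp)$. In the case $p=2$ this reads $e\leq 1$; the same conclusion holds if $p$ is odd and $p\nmid M$. The substantive case is $p$ odd with $p\mid M$, and for this case the proof must extract information beyond the raw Jantzen bound.

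For this residual case I would translate the problem via contragredient duality: since $\lambda^*=\lambda$ for type $B_\ell$, Schur's lemma gives
\[
e=\dim_k\Hom_G(V(r_1\omega_1),V(r\omega_1))=\dim_k\Hom_G(H^0(r\omega_1),H^0(r_1\omega_1)).
\]
Any such morphism annihilates the socle $L(r\omega_1)$ of $H^0(r\omega_1)$ (since $L(r\omega_1)\not\cong L(r_1\omega_1)$) and so factors through the quotient $H^0(r\omega_1)/L(r\omega_1)$. Using the realization $H^0(n\omega_1)=S^nV/fS^{n-2}V$ (with $V$ the standard orthogonal module and $f$ the defining quadratic form) and the parity observation that $r-r_1$ is even when $p$ is odd and $M$ is odd (because $r+r_1=Mp-2\ell+1$ is then even), I would exhibit a canonical nonzero morphism as the $s$-fold iterate of the $G$-equivariant contraction $\Delta\colon S^nV\to S^{n-2}V$ dual to multiplication by $f$, with $s=(r-r_1)/2$, and verify that this descends modulo $f$-multiples.

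It remains to prove that this morphism is unique up to scalar. By Frobenius reciprocity, $\Hom_G(H^0(r\omega_1),H^0(r_1\omega_1))=\Hom_B(H^0(r\omega_1),k_{r_1\omega_1})$, and this is dual to the $r_1\omega_1$-weight space of the $\mathfrak{u}^-$-coinvariants $H^0(r\omega_1)/\mathfrak{u}^-\cdot H^0(r\omega_1)$. The main obstacle will be the explicit calculation of this weight space in the polynomial model $S^rV/fS^{r-2}V$: one must show that modulo both the $f$-multiples and the action of $\mathfrak{u}^-$, the only surviving weight-$r_1\omega_1$ element is, up to scalar, the image of a single distinguished monomial. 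The bounds $0\leq r_1<r\leq p-1$ and the explicit description of $V$ keep the relevant combinatorics manageable, and this final step will conclude that $e=1$.
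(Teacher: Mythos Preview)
Your reduction to bounding $\dim\Hom_G(H^0(r\omega_1),H^0(r_1\omega_1))$ is the same first step as in the paper, and your observation that the raw Jantzen bound already gives $e\le v_p(Mp)$ (hence $e\le 1$ whenever $p=2$ or $p\nmid M$) is a pleasant shortcut the paper does not exploit. However, for the residual case you stop at exactly the point where the real work lies: you state that one must compute the $r_1\omega_1$-weight space of the $U^-$-coinvariants of $S^rV/fS^{r-2}V$ and show it is one-dimensional, but you do not do this, and you give no indication of how the combinatorics collapses. That weight space in $S^rV$ has dimension $\binom{s+\ell}{\ell}$ with $s=(r-r_1)/2$, and cutting it down to one dimension modulo both $f$-multiples and the $U^-$-action is not a triviality; calling it ``manageable'' is not a proof. (A minor technical point: you should phrase this with $U^-$-coinvariants rather than $\mathfrak u^-$-coinvariants; the surjection $M_{\mathfrak u^-}\twoheadrightarrow M_{U^-}$ goes the right way for an upper bound, so your formulation still suffices in principle, but the identification you assert is not literally correct in positive characteristic.)

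The paper bypasses this computation entirely with a good-filtration argument. For $p>2$, the module $S^r(V^*)$ is self-dual (it is a summand of $(V^*)^{\otimes r}$ since $r<p$, and $V\cong V^*$), and it has a good filtration whose sections are precisely the $H^0((r-2j)\omega_1)$ for $0\le j\le\lfloor r/2\rfloor$, each occurring once. Then
\[
e=\dim\Hom_G(H^0(r\omega_1),H^0(r_1\omega_1))\le\dim\Hom_G(S^r(V^*),H^0(r_1\omega_1))
=\dim\Hom_G(V(r_1\omega_1),S^r(V^*))\le 1,
\]
the equality by self-duality of $S^r(V^*)$ and the last inequality by the multiplicity property of good filtrations. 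This is the key idea you are missing: rather than analysing $H^0(r\omega_1)$ internally, one embeds it in a larger self-dual module with a known good filtration and reads off the bound for free.
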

\begin{proof}
We first treat the special case $p=2$. Then $r=1$ and
$V(\omega_1)\cong V$. In this case, it is well known
that $V$ has a one-dimensional $G$-fixed subspace $T$ with $V/T$
irreducible. Thus the lemma holds in this case. 
Now asssume $p>2$. Then $V\cong V^*$ as $G$-modules, whence all tensor powers
of $V^*$ are also self-dual. Since $r<p$, $S^r(V^*)$ is 
naturally a direct summand of the $r$-th tensor power of $V^*$,
so $S^r(V^*)$ is also self-dual.
For $0\leq r\leq p-1$, $S^r(V^*)$ has a good filtration \cite[4.1(4)]{AJ},
and the factors are the modules $H^0((r-2j)\omega_1)$, for
$0\leq j\leq\lfloor\frac{r}{2}\rfloor$, each with multiplicity one, as
can be seen from the character equation 
\begin{equation*}
\Ch(S^r(V^*))-\Ch(S^{r-2}(V^*))=\chi(r\omega_1),
\end{equation*}
which follows from the definition of $H^0(r\omega_1)$.
Therefore,
\begin{equation*}
\begin{split}
\dim \Hom_G(V(r_1\omega_1), V(r\omega_1))&
=\dim \Hom_G(H^0(r\omega_1),H^0(r_1\omega_1))\\
&\leq\dim \Hom_G(S^r(V^*), H^0(r_1\omega_1))\\
&=\dim\Hom_G(V(r_1\omega_1), S^r(V^*))\\
&\text{(by self-duality of $S^r(V^*)$)}\\
&=\text{multiplicity of $H^0(r_1\omega_1)$}\\
&\text{in a good filtration of $S^r(V^*)$}\\
&\leq 1.
\end{split}
\end{equation*}
\end{proof}

The proof of Theorem~\ref{ThmB} can now be completed. Parts (a),
(b), and (c) have been proved in \S\ref{summaryB} and \S\ref{submoduleB}
and part (d) is obtained by substitution of Weyl module dimensions,
given in \S\ref{Weyldimensions}.



\section{Computations for $D_\ell$}
\subsection{Evaluation of the Sum Formula}
We fix $r$ with $0\leq r\leq p-1$ and set $\lambda=r\omega_1=(r,0,\ldots,0)$.
Here, we determine the contribution of each positive root to the right hand side
of (\ref{jsum}).
We have $\rho=(\ell-1,\ell-2,\ldots, 1, 0)$, so
\begin{equation*} 
\lambda+\rho=(r+\ell-1,\ell-2,\ldots,\underbrace{\ell-i}_i,\ldots,\ldots,1,0).
\end{equation*}
\subsubsection{$\alpha=e_i-e_j$, $1<i<j$}
We have $\dprod{\lambda+\rho}{\alpha^\vee}=j-i$.
So we must consider $m$ with 
\begin{equation}\label{m7}
0<mp<j-i.
\end{equation}
Consider
\begin{equation*}
(\nu_s)_{s=1}^\ell:=\lambda+\rho-mp\alpha
=(\ldots,\underbrace{\ell-i-mp}_i,\ldots,\underbrace{\ell-j+mp}_j,\ldots).
\end{equation*}
We have 
\begin{equation*}
\ell-i>\nu_i=\ell-i-mp>\ell-i-(j-i)=\ell-j,
\end{equation*}
so $\nu_i=\nu_s$ for some $s$ with $i<s<j$, and Lemma~\ref{critstar} applies.
The total contribution to (\ref{jsum}) from positive roots of this form is
zero.
\subsubsection{$\alpha=e_i+e_j$, $1<i<j$}
We have $\dprod{\lambda+\rho}{\alpha^\vee}=2\ell-i-j$.
So we must consider $m$ with 
\begin{equation}\label{m8}
0<mp<2\ell-i-j.
\end{equation}
Consider
\begin{equation*}
(\nu_s)_{s=1}^\ell:=\lambda+\rho-mp\alpha=(\ldots,
\underbrace{\ell-i-mp}_i,\ldots,\underbrace{\ell-j-mp}_j,\ldots).
\end{equation*}
If $\ell-i-mp\leq 0$, then by (\ref{m8})
\begin{equation*}
\abs{\ell-i-mp}=mp-\ell+i<2\ell-i-j-\ell+i=\ell-j,
\end{equation*}
so $\abs{\nu_i}=\nu_s$ for some $s>j$ and Lemma~\ref{critstar} applies.
So we can assume that $\nu_i=\ell-i-mp>0$. Then  $\nu_i$ equals
to $\nu_s$ for $s>i$ unless $\ell-i-mp=\ell-j$, i.e. $mp=(j-i)$.
In that case we have
\begin{equation*}
\abs{\nu_j}=\abs{\ell-j-mp}=\abs{(\ell-j)-(j-i)}<(\ell-j)+(j-i)=\ell-i,
\end{equation*}
so $\abs{\nu_j}=\nu_s$ for some $s>i$, and Lemma~\ref{critstar} applies.
The total contribution to (\ref{jsum}) from positive roots of this form is 
zero.

\subsubsection{$\alpha=e_1-e_j$, $j>1$}
We have $\dprod{\lambda+\rho}{\alpha^\vee}=r+j-1$.
So we must consider $m$ with 
\begin{equation}\label{m9}
0<mp<r+j-1.
\end{equation}
Consider
\begin{equation*}
(\nu_s)_{s=1}^\ell:=\lambda+\rho-mp\alpha=
(r+\ell-1-mp,\ell-2,\ldots,\underbrace{\ell-j+mp}_j,\ldots).
\end{equation*}
Then by (\ref{m9})
\begin{equation*}
\ell-2\geq r+\ell-1-mp>\ell-j,
\end{equation*}
so $\nu_1=\nu_s$ for some $s$ with $1<s<j$, and Lemma~\ref{critstar} applies.
The total contribution to (\ref{jsum}) from positive roots of this form is 
zero.

\subsubsection{$\alpha=e_1+e_j$, $j>1$}
We have $\dprod{\lambda+\rho}{\alpha^\vee}= r+2\ell-j-1$.
So we must consider $m$ with 
\begin{equation}\label{m10}
0<mp<r+2\ell-j-1.
\end{equation}
Consider
\begin{equation*}
(\nu_s)_{s=1}^\ell:=\lambda+\rho-mp\alpha=
(r+\ell-1-mp,\ell-2,\ldots,\underbrace{\ell-j-mp}_j,\ldots).
\end{equation*}
If $\nu_1=r+\ell-1-mp\leq0$, then by (\ref{m10})
\begin{equation*}
\abs{\nu_1}=mp-r-\ell+1<(r+2\ell-j-1)-r-\ell+1=\ell-j,
\end{equation*}
so $\abs{\nu_1}=\nu_s$ for some $s>j$. We can therefore assume
that $\nu_1=r+\ell-1-mp>0$. Then we have $\nu_1=\nu_s$ for some $s>1$
unless $r+\ell-1-mp=\ell-j$, i.e 
\begin{equation}\label{jmp1}
j=mp-r+1.
\end{equation}
Next we consider $\nu_j$. Clearly if $\nu_j>0$, then $\nu_j=\nu_s$
for some $s>j$, so we can assume $\nu_j<0$.

Suppose $mp\leq 2\ell-j-2$. Then
\begin{equation*}
\abs{\nu_j}=mp-\ell-j\leq 2\ell-j-2-\ell+j=\ell-2,
\end{equation*}
so $\abs{\nu_j}=\nu_s$ for some $s$, by the pigeon-hole principle.
Thus, we may also assume that $mp\geq 2\ell-j-1$.
 We have shown that by Lemma~\ref{critstar}, a pair $(m,j)$ contributes nothing to 
(\ref{jsum}), except when
\begin{equation}\label{jmp2}
 2\ell-j-1\leq mp<r+2\ell-j-1 \quad\text{and $j=mp-r+1$}.
\end{equation}
Thus, if $m$ exists, it is the unique integer satisfying
\begin{equation}\label{jmp3}
r+2\ell-2\leq 2mp<2r+2\ell-2.
\end{equation}
 For this $(m,j)$, we have 
\begin{equation*}
\lambda+\rho-mp\alpha=(\ell-j,\ell-2\ldots,\underbrace{r+\ell-2mp-1}_j,\ldots,0).
\end{equation*}
The Weyl group element mapping this element into $X_+$ is the transposition
of $1$ with $j$ followed by the sign changes in the first coordinate
and the uniquely determined $s$-th coordinate with $\nu_s=0$. ($s$
is either $\ell$ or $1$). This group element has sign -1. 
On subtracting $\rho$ from the result we end up with
\begin{equation*}
(2mp-2\ell+2-r, 0,\ldots,0)=(2mp-2\ell+2-r)\omega_1.
\end{equation*}

The total contribution to (\ref{jsum}) from positive roots of this form is 
zero unless there exists $m$ satisfying (\ref{jmp3}) in which case
the contribution is $v_p(m)\chi(r_1\omega_1)$,
where $r_1=2mp-2\ell+2-r$.

\subsubsection{}\label{summaryD}
By considering the above cases, we see that the sum (\ref{jsum}) is zero
unless there exists an even integer $M$ with 
\begin{equation*}
r+2\ell-2\leq Mp<2r+2\ell-2.
\end{equation*}
In the latter case, let $r_1=Mp-2\ell+2-r$. Then  there is a contribution of 
of $-v_p(\frac{M}{2}p)\chi(r_1\omega_1)$ from the root $e_1+e_j$,
where $j=\frac{M}{2}p-r+1$. This means that if $M$ exists 
the value of (\ref{jsum})  for $\lambda=r\omega_1$ is:
\begin{equation*}
\sum_{i>0}\Ch V(\lambda)^i=\begin{cases} v_p(\frac{M}{2}p)\chi(r_1\omega_1),\quad\text{if $M$ is even.}\\
0,\quad\text{if $M$ is odd.}
\end{cases}
\end{equation*}

\subsection{Weyl Module structure for type $D_\ell$}\label{submoduleD}
The above computations show that 
$\chi(r\omega_1)$ is  equal to either  $\Ch L(r\omega_1)$
or to $\Ch L(r\omega_1)+e\chi(r_1\omega_1)$, for a certain $r_1$
with $0\leq r_1<r$ 
and some positive integer $e$. 
We note that if we repeat the process setting $\lambda=r_1\omega_1$,
and denote the parameters (corresponding to $m$ and $r_1$) 
arising  from any nonsimple case by $m_1$ and $r_2$, then we see
that $r_2\equiv r\pmod p$. Since we assume $r\leq p-1$, this is impossible.
This shows that $V(r_1\omega_1)$ is irreducible in all cases
where $V(r\omega_1)$ is not.
Thus, $\rad V(r\omega_1)$ is either zero
or else all of its composition factors are isomorphic to a simple
Weyl module $V(r_1\omega_1)$.
Since there are no self-extensions of simple modules (\cite[II.2.12]{Ja})
we see that $\rad V(r\omega_1)$ is either zero or a direct
sum of $e$ copies of the simple Weyl module $V(r_1\omega_1)$.

\begin{lemma}\label{multifree} 
$V(r\omega_1)$ is either simple or else $e=1$.
\end{lemma}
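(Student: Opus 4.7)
My plan is to mirror the proof of Lemma~\ref{multfree} for type $B_\ell$, the only difference being that the characteristic-$2$ sub-case is now trivial rather than special. If $V(r\omega_1)$ is already simple there is nothing to prove, so assume it is not. By \S\ref{summaryD} this forces the existence of an even positive integer $M$ with $r+2\ell-2\leq Mp<2r+2\ell-2$. When $p=2$ one has $r\in\{0,1\}$, and a direct inspection shows no such even $M$ can exist: the interval is empty when $r=0$, and requires $2M=2\ell-1$ when $r=1$. Hence the non-simple case forces $p>2$.

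For $p>2$, the standard orthogonal module $V$ of type $D_\ell$ is self-dual as a $G$-module because $-w_0$ fixes $\omega_1$, so every tensor power of $V^*$ is self-dual. Since $r<p$, the symmetric power $S^r(V^*)$ is a $G$-direct summand of $(V^*)^{\otimes r}$ and thus also self-dual. The description $H^0(r\omega_1)\cong S^r(V^*)/fS^{r-2}(V^*)$ recorded in \S3 yields the character identity
\begin{equation*}
\Ch(S^r(V^*))-\Ch(S^{r-2}(V^*))=\chi(r\omega_1),
\end{equation*}
and, combined with the good filtration of $S^r(V^*)$ supplied by \cite[4.1(4)]{AJ}, shows that the filtration factors are precisely the modules $H^0((r-2j)\omega_1)$, $0\leq j\leq\lfloor r/2\rfloor$, each occurring with multiplicity one.

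The decisive step is then the same chain of (in)equalities used in the $B_\ell$ proof:
\begin{equation*}
\begin{split}
\dim\Hom_G(V(r_1\omega_1),V(r\omega_1))
&=\dim\Hom_G(H^0(r\omega_1),H^0(r_1\omega_1))\\
&\leq\dim\Hom_G(S^r(V^*),H^0(r_1\omega_1))\\
&=\dim\Hom_G(V(r_1\omega_1),S^r(V^*))\\
&\leq 1,
\end{split}
\end{equation*}
using in order the duality between Weyl and induced modules, the quotient map $S^r(V^*)\twoheadrightarrow H^0(r\omega_1)$, the self-duality of $S^r(V^*)$, and the good filtration multiplicity. Since $\rad V(r\omega_1)$ is a direct sum of $e$ copies of $V(r_1\omega_1)$ by the preceding discussion, the left-hand side is at least $e$, forcing $e=1$. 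I do not anticipate a substantive obstacle: the argument transposes the $B_\ell$ proof almost verbatim, and the only arithmetic point to check carefully is the disappearance of the characteristic-$2$ case, which is a straightforward parity check.
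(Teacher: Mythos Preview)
Your proposal is correct and follows essentially the same approach as the paper, which simply notes that $V\cong V^*$ and refers back to the $p>2$ argument of Lemma~\ref{multfree} verbatim. Your explicit verification that the $p=2$ case is vacuous is a harmless addition (the paper's one-line proof implicitly covers all $p$), and the chain of inequalities you wrote out is exactly the one being invoked.
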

\begin{proof}
We have $V\cong V^*$ as $G$-modules, and we can argue exactly
as for the case $p>2$ of Lemma~\ref{multfree}.
\end{proof}

The proof of Theorem~\ref{ThmD} can now be completed. Parts (a)
and (b) have been proved in \S\ref{summaryD} and \S\ref{submoduleD}
and part (c) is obtained by substitution of Weyl module dimensions,
given in \S\ref{Weyldimensions}.



\section{Computations for $A_\ell$, $\lambda= r(\omega_1+\omega_{\ell})$}\label{sectionA}
Let $\lambda= r(\omega_1+\omega_{\ell})$, $0\leq r\leq p-1$.
\subsection{Evaluation of the Sum Formula}\label{subsectionA}
Fix $0\leq r\leq p-1$ and set 
$\lambda = r(\omega_1 + \omega_{\ell}) = r(1, 0,\ldots,0, -1)$. 
Here, we determine the contribution of each positive root to the right hand side
of (\ref{jsum}).
Recall that $\rho=(\frac{\ell}{2},\frac{\ell}{2}-1,  \ldots,  -\frac{\ell}{2}+1,  -\frac{\ell}{2})$
Hence $\ds \lambda + \rho= ( \frac{\ell}{2}+r, \frac{\ell}{2}-1,  \frac{\ell}{2}-2,
\ldots,-\frac{\ell}{2}+1, -\frac{\ell}{2}-r)$.
\subsubsection{$\alpha=e_i-e_j$,  $1<i<j<\ell+1$}
We have $\dt = j-i$. So we must consider $m$ with

\begin{equation*}
0< mp < \dt = j-i.
\end{equation*}
Consider
\begin{equation*}
(\nu_s)_{s=1}^\ell:=\n=( \ldots, \underbrace{\frac{\ell}{2}-i+1-mp}_i , \ldots, \underbrace{\frac{\ell}{2}-j+1+mp}_j , \ldots).
\end{equation*}
Then
\begin{equation*}
i<mp+i<j
\end{equation*}
and we have $\nu_{mp+i}=\frac{\ell}{2}-(mp+i)+1=\nu_i$. 
By Lemma~\ref{critstar},
the contribution to the sum (\ref{jsum}) from positive roots of this type is zero.
\subsubsection{$\alpha = e_1-e_i$ or $e_i-e_{\ell+1}$, $1< i < \ell+1$}
We have $\dt = r+i-1$, so we must consider $m$ satisfying
\begin{equation*}
0< mp < r+i-1.
\end{equation*}
Consider
\begin{equation*}
(\nu_s)_{s=1}^\ell:=\n=(\frac{\ell}{2}+r -mp , \ldots, \underbrace{\frac{\ell}{2}-i+1+mp}_i , \ldots, -\frac{\ell}{2}-r).
\end{equation*}
Since $r\leq p-1$ we also have  have $mp>r$. So 
\begin{equation*}
1<mp-r+1<i.
\end{equation*}
Then, $\nu_{mp-r+1}= \frac{\ell}{2} - mp +r=\nu_1$.
By Lemma~\ref{critstar}, the contribution to the sum from this type of positive roots is zero.
By symmetry, the contribution from the roots of the form $\alpha=e_i-e_{\ell+1}$ is also zero.
\subsubsection{$\alpha=e_1-e_{\ell+1}$}\label{onlynonzero}
We have $\dt=\ell+2r$, so we must consider $m$ with 
\begin{equation*}
0< mp < 2r+\ell.
\end{equation*}
Consider
\begin{equation*}
(\nu_s)_{s=1}^\ell:=\n = (\frac{\ell}{2}+r -mp , \frac{\ell}{2}-1 ,   \ldots, 1 -\frac{\ell}{2},  -\frac{\ell}{2}-r+mp).
\end{equation*}

Suppose $mp\leq r+\ell-1$. Then $r+\frac{\ell}{2}-mp\geq 1-\frac{\ell}{2}$, so
$\nu_1=\nu_s$ for some $s$ with $2\leq s\leq\ell$ and Lemma~\ref{critstar}
applies. Thus, in order for all $\nu_i$ to be distinct
we must have 
\begin{equation}\label{Aineq}
r+\ell-1<mp<2r+\ell.
\end{equation}
Since $(2r+\ell)-(r+\ell-1)=r+1\leq p$, there is at most one $m$
satisfying (\ref{Aineq}).
If such an $m$ exists, then the transposition $(1,\ell+1)$
takes $\lambda-mp\alpha$ to 
\begin{equation*}
(mp-\frac{\ell}{2}-r,\frac{\ell}{2}-1,\ldots,1-\frac{\ell}{2},-mp+\frac{\ell}{2}+r)\in X_+
\end{equation*}
and when we subtract $\rho$, the resulting weight is $(mp-\ell-r)(\omega_1+\omega_\ell)$.
\subsection{Contributions combined}\label{summaryA}
By considering the above cases, we see that the sum (\ref{jsum})
is zero unless there exists an integer $m$ satisfying the condition (\ref{Aineq})
and that if $m$ exists, then the value of (\ref{jsum}) for $\lambda=r(\omega_1+\omega_\ell)$ is
\begin{equation*}
\sum_{i>0}\Ch V(\lambda)^i= v_p(mp)\chi(r_1(\omega_1+\omega_\ell)),
\end{equation*}
where $r_1=mp-\ell-r$.
We claim that $V(r_1(\omega_1+\omega_\ell))$ is simple. If not, then the above
analysis applied to $r_1$ would show that $\rad V(r_1(\omega_1+\omega_\ell))$
would have the character of $V(r_2(\omega_1+\omega_\ell))$, where
$0\leq r_2=m'p-\ell-r_1$, for some integer $m'$. But then $r_2\equiv r\pmod p$,
contradicting the hypothesis that $0\leq r\leq p-1$. This proves our claim.
\subsection{Structure of $V(\lambda)$}\label{submoduleA}
We have shown either $V(\lambda)$ is  simple or else 
its radical has character equal to some positive integer $e$ times the character of  
the simple Weyl module $V(r_1(\omega_1+\omega_\ell))$.
We now show that in the latter case, we must have $e$=1.

\begin{lemma} Let $G$ be of type $A_\ell$, $\ell\geq 3$ and let $0\leq r\leq p-1$.
Then $H^0(r(\omega_1+\omega_\ell))$ is either simple or else
$H^0(r(\omega_1+\omega_\ell))/L(r(\omega_1+\omega_\ell))$ is simple
and isomorphic to $H^0(r_1(\omega_1+\omega_\ell))$, where $r_1=mp-\ell-r$.
\end{lemma}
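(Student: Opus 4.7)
The preceding analysis shows that if $H^0(r(\omega_1+\omega_\ell))$ is not simple, then $\rad V(r(\omega_1+\omega_\ell))$ is a direct sum of $e$ copies of the simple Weyl module $V(r_1(\omega_1+\omega_\ell))$ for some integer $e\geq 1$, where $r_1=mp-\ell-r$ and $0\leq r_1<r$. Thus the lemma will follow once we establish $e\leq 1$, which by duality is equivalent to
\[
\dim\Hom_G(H^0(r(\omega_1+\omega_\ell)),H^0(r_1(\omega_1+\omega_\ell)))\leq 1.
\]
The plan is to mimic the argument of Lemma~\ref{multfree}, with the role of $S^r(V^*)$ now played by
\[
M=H^0(r\omega_1)\otimes H^0(r\omega_\ell)\cong S^r V\otimes S^r V^*,
\]
using that for $0\leq r\leq p-1$ both $S^r V$ and $S^r V^*$ are simple and coincide with the corresponding induced modules. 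The module $M$ is self-dual, since in type $A_\ell$ one has $H^0(r\omega_1)^*\cong H^0(r\omega_\ell)$.

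By the tensor product theorem for modules with good filtration \cite[II.4.21]{Ja}, $M$ has a good filtration. A routine character computation, either directly from Weyl's formula or from the telescoping dimension identity
\[
\binom{r+\ell}{\ell}^2=\sum_{j=0}^{r}\left(\binom{j+\ell}{\ell}^2-\binom{j+\ell-1}{\ell}^2\right)
\]
together with the linear independence of the $\chi(\mu)$'s, yields
\[
\chi(r\omega_1)\chi(r\omega_\ell)=\sum_{j=0}^{r}\chi(j(\omega_1+\omega_\ell)),
\]
so each $H^0(j(\omega_1+\omega_\ell))$ with $0\leq j\leq r$ appears as a filtration factor of $M$ with multiplicity exactly one. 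Since $r(\omega_1+\omega_\ell)$ is maximal in dominance order among the weights appearing, the good filtration can be arranged so that $H^0(r(\omega_1+\omega_\ell))$ is the top quotient, giving a surjection $\pi\colon M\twoheadrightarrow H^0(r(\omega_1+\omega_\ell))$.

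Precomposition with $\pi$ embeds $\Hom_G(H^0(r(\omega_1+\omega_\ell)),H^0(r_1(\omega_1+\omega_\ell)))$ into $\Hom_G(M,H^0(r_1(\omega_1+\omega_\ell)))$. The self-duality of $M$ and the Ext-vanishing $\Ext^1_G(V(\lambda),H^0(\mu))=0$ identify this latter space with $\Hom_G(V(r_1(\omega_1+\omega_\ell)),M)$, whose dimension equals the multiplicity of $H^0(r_1(\omega_1+\omega_\ell))$ in the good filtration of $M$, namely $1$ (since $0\leq r_1<r$). Chaining these (in)equalities gives $e\leq 1$. The only nontrivial step is arranging the good filtration with $H^0(r(\omega_1+\omega_\ell))$ on top; this is standard from maximality in dominance order, and the remaining ingredients — the character identity, the reciprocity formula for $\Hom_G(V(\lambda),-)$, and the self-duality trick — are straightforward adaptations of the arguments already used in Lemmas~\ref{multfree} and~\ref{multifree}.
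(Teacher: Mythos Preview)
Your proof is correct and follows essentially the same approach as the paper: both use the self-dual module $M=H^0(r\omega_1)\otimes H^0(r\omega_\ell)$, the classical character identity $\chi(r\omega_1)\chi(r\omega_\ell)=\sum_{j=0}^r\chi(j(\omega_1+\omega_\ell))$, and the multiplicity property of good filtrations to bound $e$ by $1$. The paper's chain of inequalities is phrased slightly differently (mapping into $L(r_1(\omega_1+\omega_\ell))$ rather than $H^0(r_1(\omega_1+\omega_\ell))$, and leaving the surjection $M\twoheadrightarrow H^0(r(\omega_1+\omega_\ell))$ implicit), but the substance is identical.
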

\begin{proof}
The following character formula is classical and
a special case of the Littlewood-Richardson rule:
\begin{equation}\label{classicalprod}
\chi(r\omega_1)\chi(r\omega_\ell)=\sum_{s=0}^r\chi(s(\omega_1+\omega_\ell)).
\end{equation}
Then, since $H^0(r_1(\omega_1+\omega_\ell))$ is simple and self-dual, while
$H^0(r\omega_1)$ and $H^0(r\omega_\ell)$ are simple and dual to each other, we have
\begin{equation*}
\begin{aligned}
1\leq e&=\dim\Hom_G(H^0(r(\omega_1+\omega_\ell)), L(r_1(\omega_1+\omega_\ell)) )\\
&\leq \dim\Hom_G(H^0(r\omega_1)\otimes H^0(r\omega_\ell), L(r_1(\omega_1+\omega_\ell)) )\\
&=\dim\Hom_G(L(r_1(\omega_1+\omega_\ell),H^0(r\omega_1)\otimes H^0(r\omega_\ell)) )\\
&=\dim\Hom_G(V(r_1(\omega_1+\omega_\ell),H^0(r\omega_1)\otimes H^0(r\omega_\ell)) )\\
&\leq 1,
\end{aligned}
\end{equation*}
by (\ref{classicalprod}) and the multiplicity property of good filtrations.
\end{proof}

The proof of Theorem~\ref{ThmA} can now be completed. Parts (a)
and (b) have been proved in \S\ref{summaryA} and \S\ref{submoduleA}
and part (c) is obtained by substitution of Weyl module dimensions,
given in \S\ref{Weyldimensions}.



\section{Computations for type $A_\ell$, $\lambda=\omega_2+\omega_{\ell-1}$}\label{sectionA'}
Let  $\lambda=\omega_2+\omega_{\ell-1}$ , $\ell\geq 4$.
\subsection{Evaluation of the Sum Formula}\label{subsectionA'}
Here, we determine the contribution of each positive root to the right hand side
of (\ref{jsum}).
We have $\lambda = (1, 1,0,\ldots, 0, -1, -1)$, $\rho=(\el, \el-1, \el-2, \ldots, -\el+2, -\el+1, -\el)$, so $\lambda + \rho = (\el+1, \el, \el-2, \ldots, \el-i+1, \ldots, -\el+2, -\el, -\el-1)$.
We search for $m$'s satisfying
\begin{equation} \label {1}
 0<mp<\langle \lambda + \rho, \alpha \rangle.
 \end{equation}
Since $p\geq2$ and $m\neq0$ the inequality (\ref{1}) is equivalent to
\begin{equation} \label{mm2}
2 \leq  mp < \langle \lambda+\rho, \alpha \rangle.
\end{equation}

We find $m$'s and $\alpha$'s satisfying (\ref{mm2}). Then we find out whether $\lambda+\rho-mp\alpha$ has any two coordinates of the same value. 

\subsubsection{$\alpha = e_1-e_2$ or $\alpha = e_{\ell}-e_{\ell+1}$}
This case is trivial since  $\dt = 1$.
Therefore for any $p$ and $\ell$, the  contribution to the sum from the positive roots of this type is zero.

\subsubsection{$\alpha = e_1-e_i$ or  $e_i-e_{\ell+1}$, $2<i<\ell$}
For this case $\dt = i$, hence
\begin{equation}\label{m2c2}
 2 \leq mp <i.
 \end{equation}
We consider
\begin{equation*}
(\nu_s)_{s=1}^\ell:=\n=(\el+1-mp, \el,  \el-2, \ldots, \underbrace{\el-i+1 + mp,}_i \ldots).
\end{equation*}
By (\ref{m2c2}) we have
\begin{equation*}
\el+1-i < \el+1-mp \leq \el -1.
\end{equation*}
Thus unless $\el +1 - mp = \el -1$, there is an $s$ with $3\leq s <i$ such that $\nu_1=\nu_s$.
If $\el +1 - mp = \el -1$,  then  $mp=2. $ Hence the equality holds if and only if $p=2$ and $m=1$. In this case,
\begin{equation*}
(\nu_s)_{s=1}^\ell:=\n=(\el -1, \el , \el-2, \ldots, \underbrace{\el-i+3}_i, \ldots, -\el+2, -\el, -\el-1).
\end{equation*}
Since $i>2$ we get 
\begin{equation*}
\el+2-i < \el+3-i < \el +1.
\end{equation*}
Hence there exists $s$ with $1 \leq s \leq i-1$ such that $\nu_i = \nu_s$ in this case also. 
Therefore for any $p$ and $\ell$, the  contribution to the sum from the positive roots of this type is zero.
By symmetry we see that contribution to the sum from the roots of type 
$\alpha = e_i-e_{\ell+1}$, $2<i<\ell$ is also zero. 

\subsubsection{$\alpha = e_1-e_{\ell}$ or $e_2-e_{\ell+1}$}
For this case $\dt = \ell+1$. Hence
\begin{equation}\label{m2c3}
2 \leq mp < \ell+1.
\end{equation}
We consider
\begin{equation*}
(\nu_s)_{s=1}^\ell:=\n= (\el+1-mp, \el, \el-2, \ldots, -\el+2, -\el+mp, -\el-1).
\end{equation*}
From (\ref{m2c3}) we get
\begin{equation*}
-\el+2 \leq -\el+mp < \el +1.
\end{equation*}
The only way that $\nu_{\ell}$ is different from the other coordinates of $\nu$ is that $\nu_{\ell}=-\el+mp=\el-1$ which gives us $ mp=\ell-1$. In this case,
\begin{equation*}
\nu_1 = \el +1 -mp = -\el+2 = \nu_{\ell-1}.
\end{equation*}
Hence there is at least two equal coordinates of $\nu$.
Therefore for any $p$ and $\ell$, the  contribution to the sum from the positive roots of this type is zero.
By symmetry, the contribution to the sum from the root $\alpha = e_2-e_{\ell+1}$ is also zero. 

\subsubsection{ $\alpha = e_2-e_i$ or $e_i-e_{\ell}$, $2<i<\ell$}
For this case $\dt = i-1$. Hence
\begin{equation}\label{m2c4}
2 \leq mp < i-1.
\end{equation}
We consider
\begin{equation*}
(\nu_s)_{s=1}^\ell:=\n=(\el+1,  \el - mp , \el-2, \ldots, \el-i+1+mp, \ldots ).
\end{equation*}
From (\ref{m2c4}) we get
\begin{equation*}
\el - i +1 < \el - mp \leq \el -2
\end{equation*}
Thus there is an $s$ with $3\leq s < i$ such that $\nu_2=\nu_s$.
Therefore for any $p$ and $\ell$, the  contribution to the sum from the positive roots of this type is zero.
By symmetry, the contribution to the sum from the roots of type $\alpha = e_i-e_{\ell}$, $2<i<\ell$ is also zero. 

\subsubsection{$\alpha = e_1-e_{\ell +1}$}
For this case $\dt = \ell + 2 $. Hence,
\begin{equation}\label{m2c5}
2 \leq mp < \ell+2.
\end{equation}
We consider 
\begin{equation*}
(\nu_s)_{s=1}^\ell:=\n= (\el+1 -mp, \el, \el-2, \ldots, -\el+2,  -\el,  -\el-1+mp ).
\end{equation*}
With (\ref{m2c5}) we get
\begin{equation*}
-\el-1<\el+1-mp\leq \el-1.
\end{equation*}
Thus the $\nu_i$ are distinct if and only if one of the following conditions hold.
\begin{enumerate}
\item[(i)] $\nu_1 =\el-1$.
\item[(ii)] $\nu_1 = -\el +1$.
\end{enumerate}
The condition (i) holds if and only if $mp=2$. Thus unless $p=2$ and $m=1$, 
$\nu_1$ is equal to one of the other  $\nu_i$.
If $p=2$ and $m=1$, then
\begin{equation*}
(\nu_s)_{s=1}^\ell:=\n = ( \el-1,  \el,  \el-2,  \ldots  , -\el+2,   -\el ,   -\el+1).
\end{equation*}
When we conjugate this element into $X_+$ with the permutation 
$(1,2)(\ell,\ell+1)$ and subtract $\rho$ we obtain
$(0, 0, \ldots,0)$. The permutation has positive sign.

The condition (ii) holds if and only if $ mp = \ell$. In this case,
\begin{equation*}
(\nu_s)_{s=1}^\ell:=\n=(-\el+1, \el, \el-2, \ldots , -\el+2, -\el, \el-1).
\end{equation*}
When we conjugate this element into $X_+$ with the permutation 
$(1,\ell-1,\ell,2)$ and subtract $\rho$ we obtain
$(0, 0, \ldots,0)$. The permutation has negative sign.

\subsubsection{$\alpha = e_2-e_{\ell}$}
For this case $\dt = \ell  $. Hence
\begin{equation}\label{m2c6}
2 \leq mp < \ell .
\end{equation}
We consider
\begin{equation*}
(\nu_s)_{s=1}^\ell:=\n=(\el+1, \el-mp, \el-2, \ldots, -\el+2,  -\el+mp,  -\el-1 ).
\end{equation*}
Hence $\lambda-mp\alpha$ has distinct coordinates if and only if $\nu_2 = -\el+1$. That is $mp +1 = \ell$.
In this case,
\begin{equation*}
\lambda-mp\alpha = (\el+1, -\el+1,  \el-2,  \ldots, -\el+2, \el-1, -\el-1).
\end{equation*}
So its Weyl conjugate in $X_+$ is
\begin{equation*}
( \el+1, \el-1, \el-2, \ldots, -\el+2, -\el+1, -\el-1)
\end{equation*}
and on subtraction of $\rho$ we obtain
$\omega_1+\omega_{\ell}$.
The Weyl group  element has negative sign.

\subsection{Contributions combined}\label{summaryA'}
If $p=2$ and $\ell$ is even, we have
\begin{equation}\label{2elleven}
\sum_{i>0}\Ch V(\lambda)^i= (v_2(\ell)-1)\chi(0).
\end{equation}
Thus, $\Ch(\rad V(\lambda))=e_0\chi(0)$, where $e_0\geq 0$. 
For $p=2$ and odd $\ell$,
\begin{equation}\label{2ellodd}
\sum_{i>0}\Ch V(\lambda)^i= -\chi(0)+v_2(\ell-1)\chi(\omega_1+\omega_{\ell}).
\end{equation}
Thus, $\Ch(\rad V(\lambda))=-\chi(0)+e_1\chi(\omega_1+\omega_\ell)$, where $e_1\geq 1$.

If $p$ is odd $V(\lambda)$ is simple unless $\ell = 0\pmod p$  or $\ell = 1\pmod p$. If $ \ell = 0\pmod p$ then 
\begin{equation}\label{pdivell}
\sum_{i>0}\Ch V(\lambda)^i= v_p(\ell)\chi(0).
\end{equation}
Thus, $\Ch(\rad V(\lambda))=e_2\chi(0)$, where $e_2\geq 1$.

If $\ell = 1\pmod p$ then 
\begin{equation}\label{1modp}
\sum_{i>0}\Ch V(\lambda)^i= v_p(\ell-1)\chi(\omega_1+\omega_{\ell}).
\end{equation}
Thus, $\Ch(\rad V(\lambda))=e_3\chi(\omega_1+\omega_\ell)$, where $e_3\geq 1$.

It is a well known special case of Theorem~\ref{ThmA} that
\begin{equation}\label{Vom1ell}
\chi(\omega_1+\omega_{\ell})=\begin{cases} \Ch L(\omega_1+\omega_{\ell})+\chi(0)\quad
\text{if $\ell\equiv -1\pmod p$,}\\
\Ch L(\omega_1+\omega_{\ell})\quad\text{otherwise.}
\end{cases}
\end{equation}
Thus, when $\ell = 1\pmod p$, we see that
$V(\omega_1+\omega_\ell)$ is simple if $p$ is odd and has 
radical isomorphic to $k$ if $p=2$.

\subsection{Structure of $V(\lambda)$}\label{submoduleA'}
In this subsection we shall prove Theorem~\ref{ThmA'}. We begin with some general
facts which do not depend on $p$.
We observe that $V(\omega_2)$ is simple and isomorphic to $\wedge^2(V)$,
where $V$ is the standard $\ell+1$-dimensional module, and that $V(\omega_{\ell-1})\cong \wedge^{\ell-1}(V)$ is isomorphic to the dual of $V(\omega_2)$. 
Thus, 
\begin{equation}\label{homkM}
\Hom_G(k, L(\omega_2)\otimes L(\omega_{\ell-1}))\cong \Hom_G(L(\omega_2),L(\omega_2))\cong k.
\end{equation}
A fundamental property of Weyl modules and induced modules
(\cite[II.4.19]{Ja}) is that the tensor product of two Weyl
modules or two induced modules has a filtration by modules of the same kind,
with the factors being the same as for the corresponding
tensor product in characteristic zero.
Thus, the classical identity \cite[p.225]{FH}:
\begin{equation}\label{LR}
\chi(\omega_2)\chi(\omega_{\ell-1})=\chi(\omega_2+\omega_{\ell-1})+\chi(\omega_1+\omega_\ell)+\chi(0).
\end{equation}
implies that $M:=V(\omega_2)\otimes V(\omega_{\ell-1})$
has a filtation $E_1\subset E_2\subset M$ with factors
$E_1\cong V(\omega_2+\omega_{\ell-1})$, $E_2/E_1\cong V(\omega_1+\omega_\ell)$
and $M/E_2\cong k$.
It follows that 
\begin{multline}\label{homkV}
\dim\Hom_G(k, V(\omega_2+\omega_{\ell-1}))\leq \dim\Hom_G(k, M)\\
=\dim\Hom_G(L(\omega_2),L(\omega_2))=1.
\end{multline}
Dually, we know that $M$ has a quotient 
isomorphic to $H^0(\omega_2+\omega_{\ell-1})$.
Since the highest weight $\omega_2+\omega_{\ell-1}$
occurs in $M/\rad E_1$, the latter also has $H^0(\omega_2+\omega_{\ell-1})$
as a quotient.
Therefore, if we write $[A:L(\mu)]$ for the multiplicity of the simple $G$-module
$L(\mu)$ as a composition factor of the $G$-module $A$, we have
\begin{equation}\label{compmult}
\begin{split}
[V(\omega_2+\omega_{\ell-1}):L(\omega_1+\omega_\ell)]&=
[H^0(\omega_2+\omega_{\ell-1}):L(\omega_1+\omega_\ell)]\\
&\leq [M/\rad E_1:L(\omega_1+\omega_\ell)]\\ 
&=[M/E_1:L(\omega_1+\omega_\ell)]\\  
&=[E_2/E_1:L(\omega_1+\omega_\ell)]+[M/E_2:L(\omega_1+\omega_\ell)]\\
&=1.\end{split}
\end{equation}
Now we are ready to prove each part of Theorem~\ref{ThmA'}.
Parts (c) and (f) are immediate, since by \S\ref{summaryA'} the sum
formula evaluates to zero.
In (a) and (d), we see from (\ref{2elleven}) and (\ref{pdivell}) that 
$\rad V(\lambda)$ has only trivial composition factors, with positive
multiplicity. Since $\Ext^1_G(k,k)=0$, it follows from (\ref{homkV})
that the multiplicity is one.
In (b) we know from (\ref{1modp}) and (\ref{Vom1ell}) that the only
composition factor of $\rad V(\lambda)$, with positive multiplicity, is
$L(\omega_1+\omega_\ell)\cong V(\omega_1+\omega_\ell)$. By (\ref{compmult})
the multiplicity is one.
In (g) we see from (\ref{2ellodd}) and (\ref{Vom1ell}) that the only
composition factor of $\rad V(\lambda)$, with positive multiplicity, is
$L(\omega_1+\omega_\ell)$. By (\ref{compmult}) the multiplicity is one.
The proof of (e) is more involved.
Since $\dim L(\omega_2)=\binom{\ell+1}{2}$ is odd, 
$M\cong\End(L(\omega_2))$ decomposes as the direct sum of
the scalar multiples of the identity map and the space of trace zero endomorphisms. 
The indecomposable submodule $E_1$ must therefore be 
isomorphic to a submodule of the nontrivial summand.
Then, by (\ref{homkM}), it follows that
\begin{equation}\label{nok}
\Hom_G(k,V(\lambda))=0.
\end{equation}
The Sum Formula tells us that the composition factors (with positive
multiplicities) of $\rad V(\lambda)$
are $k$ and $L(\omega_1+\omega_\ell)$. By (\ref{compmult}), we know that
$[V(\lambda):L(\omega_1+\omega_\ell)]=1$. 
Then by (\ref{nok}), it follows that $\soc(\rad V(\lambda))\cong L(\omega_1+\omega_\ell)$.
Since $\Ext^1_G(k,k)=0$, the module  $(\rad V(\lambda))/\soc(\rad V(\lambda))$
must be a trivial module of dimension $\leq \dim\Ext^1_G(k, L(\omega_1+\omega_\ell))$.
By (\ref{Vom1ell}), 
 the self-duality of $L(\omega_1+\omega_\ell)$  and a
standard property of Weyl modules \cite[II.2.14]{Ja},
 we know that
\begin{equation}\label{ext}
\Ext^1_G(k,L(\omega_1+\omega_\ell))\cong \Ext^1_G(L(\omega_1+\omega_\ell),k)\cong k.
\end{equation}
Hence $\rad V(\lambda)$ is a nonsplit extension
of $k$ by $L(\omega_1+\omega_\ell)$. Such a module is unique up to
isomorphism, by (\ref{ext}), and isomorphic to $H^0(\omega_1+\omega_\ell)$,
by (\ref{Vom1ell}).
This completes the proof of (e).

Finally, (h) is obtained by substitution of Weyl module dimensions.
given in \S\ref{Weyldimensions} and Theorem~\ref{ThmA}.
This completes the proof of Theorem~\ref{ThmA'}.



\section{Further computations for Type $A_4$}
The aim of this section is to prove Theorem~\ref{ThmA''}.
Let $\lambda=(p-1)(\omega_2 + \omega_3)$ and  $\lambda_1=(p-2)(\omega_2+\omega_3)$.
First we observe that for $p=2$, we have $\lambda=\omega_2+\omega_3$, so the structure of $V(\lambda)$ has been given in Theorem~\ref{ThmA'}. Its radical is isomorphic to $k$.
So (a) is proved, as also is (e). We assume that $p$ is odd from now on.
We begin with (c). Let $\lambda_2=(p-2)(\omega_1+\omega_4)$.
Then $V(\lambda_2)$ is simple, by Theorem~\ref{ThmA}.
Next, we compute the Sum Formula (\ref{jsum}) for
$\lambda_1=(p-2,p-2,0,2-p,2-p)$. We have $\rho = (2, 1, 0, -1, -2)$. 
Using Lemma~\ref{critstar} we see that the only nonzero contribution is from
the positive root $(0, 1, 0, -1, 0)$ and for $m=1$. This yields
\begin{equation*}
\sum_{i>0}\Ch V(\lambda_1)^i=\chi(\lambda_2).
\end{equation*}
It follows that $\rad V(\lambda_1)\cong V(\lambda_2)$, proving (c).
To prove (b) we compute (\ref{jsum}) for $\lambda=(p-1, p-1, 0, 1-p, 1-p)$.
Direct calculations show that for all $\alpha$, except 
$\alpha = (1, 0, 0,  0, -1)$, either there is no $m$ satisfying $0<mp<\dt$, or $\bir$ has repeated entries. When  $\alpha = (1, 0, 0, 0, -1)$, $m$ can be $1$ or $2$.
Consider the case $\alpha = (1, 0, 0, 0, -1)$, and $m=1$. Then
$\bir = (1, p,  0,  -p,  -1) $, 
so the permutation sending $\bir$ to its conjugate $(p,  1, 0, -1,-p)\in X_+$ has positive sign.  
Subtraction of $\rho$ from this conjugate yields
$(p-2) (\omega_1+\omega_4)$. 
Hence $\kay = \chi((p-2)(\omega_1+\omega_4))$.
In the case $m=2$, we get $\bir = (-p+1, p, 0, -p, p-1)$ which is sent to its conjugate
$(p, p-1, 0, 1-p, -p)\in X_+$ by a permutation with negative sign.
Subtraction of $\rho$ from this conjugate yields $(p-2)(\omega_2 + \omega_3)$. 
Hence $\kay = - \chi((p-2)(\omega_2+\omega_3))$.
For both cases $v_p(mp)=1$, since $p>2$.
Thus, 
\begin{equation}\label{tally}
\sum_{i>0}\Ch V(\lambda)^i=\chi(\lambda_1)-\chi(\lambda_2)=\Ch L(\lambda_1),
\end{equation}
where the last equality is by (c).
It follows that $\rad V(\lambda)\cong L(\lambda_1)$ and (b) is proved.

By (b) and (c) we have
\begin{equation*}
\dim L(\lambda)= \dim V(\lambda)-\dim V(\lambda_1)+\dim V(\lambda_2).
\end{equation*}
Then the proof of (d) is completed by substituting the dimensions of Weyl modules
from \S\ref{Weyldimensions}.















\section{$p$-ranks of incidence matrices }\label{prankproofs}

In this section, we explain how the theorems on $p$-ranks
stated in \S\ref{prankresults} may be deduced from our results on
the dimensions of simple modules. We shall use some of the notation
from that earlier discussion. 

\subsection{The permutation module on singular points}
We recall that $P$ is either the set of isotropic $1$-dimensional subspaces
in the $n$-dimensional space $V(q)$ with respect to a nondegenerate
quadratic form $f$, or else the set of singular $1$-dimensional subspaces
in $V(q^2)$ with respect to a nonsingular Hermitian form $h$.
In both cases, the set of polar hyperplanes is denoted by $P^*$.
The map sending each $p\in P$ to its polar hyperplane $p^\perp\in P^*$
is clearly a permutation isomorphism with respect to the action of
the projective orthogonal group $\PO(V(q),f)$ in the quadratic case
or  the projective unitary group $\PGU(V(q^2),h)$ in the Hermitian case.
For technical reasons, rather than working with these groups, 
it will be more convenient for us to work with certain subgroups which
are {\it Chevalley groups}, under which term we shall include
 the twisted as well as the untwisted types. 
Let $\Om(V(q),f)$ denote the commutator subgroup of the orthogonal group
$\Orth(V(q),f)$ and let $\POm(V(q),f)$ be its image in $\PO(V(q),f)$.
By \cite[Theorems 11.3.2 and 14.5.2]{Ca}, $\POm(V(q),f)$ is, with one exception,
a Chevalley group of type $B_\ell(q)$, $D_\ell(q)$ or $^2D_\ell(q)$, with 
$\dim V(q)=2\ell+1$ in the first case and $2\ell$ in the other two cases. 
The exceptional case is when $q=2$ and $\dim V(q)=5$, in which case the
Chevalley group of type $B_2(2)$ is a subgroup of $\PO(V(q),f)$
containing $\POm(V(q),f)$ with index 2.
By \cite[Theorems 14.5.1]{Ca}, $\PSU(V(q^2),h)$ is a (twisted) Chevalley group
of type $^2A_\ell(q)$, where $\dim V(q^2)=\ell+1$.
In each case, we shall denote the Chevalley group by ${\overline G(q)}$.

The permutation module over $k$ with basis $P$
can be identified with the space $k[P]$ of all functions from $P$ to $k$
by identifying a point with its characteristic function.
Being a permutation module, it is a self-dual
$k{\overline G(q)}$-module, with $P$ as an orthonormal basis of a nonsingular 
${\overline G(q)}$-invariant symmetric bilinear form.
Since $\abs{P}\equiv 1\pmod p$,
we have an orthogonal direct sum decomposition of $k{\overline G(q)}$-modules
\begin{equation}\label{1plusY}
k[P]=k\allone \oplus Y_P,
\end{equation}
where $\allone$ is the constant function with value 1 and
$Y_P$ is the subspace of functions $f$ such that $\sum_{p\in P}f(p)=0$.
Let $\pi_\allone$ and $\pi_{Y_P}$ be the projections with respect to this decomposition.
Thus, $\pi_1$ maps each $p\in P$ to $\allone$.
The {\it head} of $Y_P$, $\head(Y_P)$, is defined as the largest 
semisimple quotient of $Y_P$ and the {\it socle}, $\soc(Y_P)$, is
the maximal semisimple submodule. Let $L=\soc(Y_P)$. The self duality
of $k[P]$ and (\ref{1plusY}) imply that $Y_P$ is self-dual
and hence that $\head(Y_P)\cong L^*$.
We claim that $L^*\cong L$ as $k{\overline G(q)}$-modules.
If ${\overline G(q)}$ is of type $B_\ell(q)$ then every simple module is self-dual.
This is also true for type $D_\ell(q)$ or $^2D_\ell(q)$  when $\ell$ is even.
Suppose ${\overline G(q)}\cong \POm(V(q),f)$ 
is of type $D_\ell(q)$ or $^2D_\ell(q)$ with $\ell$ odd.
Then there  is an automorphism $\delta$ of $\Om(V(q),f)$,
such that twisting a simple module by $\delta$ yields the dual simple module.  
If $M$ is the module of a representation of $\Om(V(q),f)$,
we denote by $M^\delta$ the twisted module obtained by composing
the representation with $\delta$. Then $V(q)\cong V(q)^\delta$ and since
$k[P]$ is constructed from $V(q)$, it follows that $k[P]\cong k[P]^\delta$.
Hence $Y_P\cong (Y_P)^\delta$ and $L\cong L^\delta\cong L^*$.
Similarly, in the Hermitian case, the group $\SU(V(q^2),h)$
has an automorphism $\delta$ which dualizes simple modules.
Therefore, the form $h$ gives an isomorphism of $V(q^2)^\delta$ with the 
Galois twist $V(q^2)^{(q)}$. Since $k[P]$ is constructed from $V(q^2)$, 
it follows that $k[P]^{(q)}\cong k[P]^\delta$.
Then, because $k[P]$ is isomorphic to all of its Galois conjugates,
we have $k[P]\cong k[P]^\delta$, from which we get $L\cong L^\delta\cong L^*$.
This completes the proof of our claim.
\begin{lemma}\label{endodims}\ 
\begin{enumerate}
\item[(a)]The action of ${\overline G(q)}$ on $P$
has permutation rank $3$.
\item[(b)]$\dim\End_{k{\overline G(q)}}(k[P])=3$.
\item[(c)]$\dim\End_{k{\overline G(q)}}(Y_P)=2$.
\end{enumerate}
\end{lemma}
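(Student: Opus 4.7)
The plan is to prove the three parts in the order stated, with (b) and (c) following quickly once (a) is established.

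For part (a), I will show that $\overline G(q)$ has exactly three orbits on $P\times P$: the diagonal $\Delta$, the set $\Omega_1$ of pairs $(p,p')$ with $p\neq p'$ and $p\subseteq(p')^\perp$, and the complementary set $\Omega_2$ of non-perpendicular pairs. Invariance of these three sets is immediate from the fact that $\overline G(q)$ preserves the form, so it only remains to check transitivity within each. This is a standard consequence of Witt's extension theorem applied to the full isometry group, together with the fact that for the types $B_\ell(q)$, $D_\ell(q)$, $^2D_\ell(q)$, and $^2A_\ell(q)$ under consideration, the commutator subgroup is large enough that no orbit of the full projective isometry group on pairs splits upon restriction to $\overline G(q)$: in each putative ``half-orbit'' one exhibits a stabilizing element outside the commutator subgroup (of non-trivial determinant or spinor norm in the orthogonal case, of non-trivial determinant in the unitary case). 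The exceptional $B_2(2)$ case is absorbed because $\overline G(q)$ there lies between $\POm(V(q),f)$ and an overgroup of index $2$, so the count of orbits is unchanged.

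For part (b), I invoke the standard fact that for any action of a group $G$ on a finite set $P$ and any field $k$, the dimension of $\End_{kG}(k[P])$ equals the number of $G$-orbits on $P\times P$: for each orbit $\Omega$, the map $A_\Omega$ defined on basis elements by $A_\Omega(p)=\sum_{p'\colon(p,p')\in\Omega}p'$ is a $G$-equivariant endomorphism, and the $A_\Omega$ form a basis of $\End_{kG}(k[P])$. Combined with (a), this gives (b).

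For part (c), the orthogonal direct sum decomposition \eqref{1plusY} yields
\begin{equation*}
\End_{k\overline G(q)}(k[P]) = \End(k\allone)\oplus\Hom_{k\overline G(q)}(k\allone,Y_P)\oplus\Hom_{k\overline G(q)}(Y_P,k\allone)\oplus\End_{k\overline G(q)}(Y_P).
\end{equation*}
Since $\overline G(q)$ acts transitively on $P$, the fixed space $k[P]^{\overline G(q)}$ is one-dimensional, spanned by $\allone$; hence $\Hom_{k\overline G(q)}(k,Y_P)=0$. Dualizing and using the self-duality of $Y_P$ established just before the lemma statement gives the vanishing of $\Hom_{k\overline G(q)}(Y_P,k)$ as well. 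The displayed formula then forces $\dim\End_{k\overline G(q)}(Y_P)=3-1=2$.

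The main obstacle is the verification in part (a) that no orbit of the full projective isometry group on $P\times P$ splits when restricted to the Chevalley subgroup $\overline G(q)$. This requires inspecting, for each type, the index of $\overline G(q)$ in the full projective isometry group and producing, inside the stabilizer of a representative pair in $\Omega_1$ and $\Omega_2$, an element outside the commutator subgroup. The computation is routine but must be carried out across the several classical types together with the exceptional $B_2(2)$ case; once this is done, parts (b) and (c) are formal.
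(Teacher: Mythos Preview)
Your proposal is correct and follows essentially the same approach as the paper. The paper is considerably terser: for (a) it simply cites Dieudonn\'e \cite{D} as well known (noting the rank hypotheses on $\ell$), for (b) it invokes the double-coset/orbit basis of the endomorphism ring exactly as you do, and for (c) it just says the result follows from the decomposition \eqref{1plusY}; your argument for (c) via the vanishing of the cross $\Hom$-terms is the natural way to unpack that remark.
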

\begin{proof}

Part (a) is well known \cite{D}. (We should point out that our 
hypotheses on $\ell$ ($\ell\geq 3$ for type $A_\ell$, $\ell\geq 2$ for type $B_\ell$, and $\ell\geq 3$ for type $D_\ell$ are needed here.)
Then (b) follows from the general fact that 
the endomorphism ring of a transitive permutation module
has a natural basis in bijection with the set of double cosets
of a stabilizer. Therefore, the cardinality of the basis 
equals the rank of the permutation representation.
Part (c) now follows from the decomposition (\ref{1plusY}).
\end{proof}

By means of the isomorphism of ${\overline G(q)}$-sets between $P^*$ and $P$ the
incidence matrix between $P^*$ and $P$ can be interpreted as the map
\begin{equation*}
\phi: k[P]\to k[P], p\mapsto\sum_{p'\in p^\perp} p',
\end{equation*}
which clearly belongs to $\End_{k{\overline G(q)}}(k[P])$.

Let ${\overline G(q)}_P$ be the stabilizer of an element of $P$. Thus, 
the possible choices for ${\overline G(q)}_P$ form a conjugacy class in ${\overline G(q)}$. Later we shall 
make  a definite choice.

\begin{lemma}\ 
\begin{enumerate}
\item[(a)] $k[P]$ is not semisimple.
\item[(b)] $L$ is simple.
\item[(c)] $L$ is, up to isomorphism, the unique nontrivial simple $k{\overline G(q)}$-module
on which ${\overline G(q)}_P$ has nonzero fixed points. 
\item[(d)] $\image\phi=k\allone\oplus L$.
\end{enumerate}
\end{lemma}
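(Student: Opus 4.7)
Proof proposal:

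I plan to prove the four assertions jointly, using $\dim\End_{k\overline{G(q)}}(Y_P)=2$ from Lemma~\ref{endodims} together with an explicit analysis of the incidence endomorphism $\phi$ on $Y_P$. The central observation is that $\phi = I + A_1$, where $A_1$ is the adjacency operator of the collinearity graph of the polar space on $P$, a strongly regular graph in each of the three geometric cases. The standard SRG identity $\phi^2 = a\phi + bI + cJ$ (with $J$ the all-ones operator and $a,b,c\in\Z$ depending on the SRG parameters) restricts on $Y_P$, where $J$ vanishes, to $\bar\phi^2 = a\bar\phi + bI$. A uniform calculation from the standard polar-space point-counting formulas shows that the two non-principal eigenvalues of $A_1$ both reduce to $-1$ in $k$ (they have the form $\pm q^e - 1$ with $e \geq 1$, and $q = p^t \equiv 0 \pmod p$). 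Hence both non-principal eigenvalues of $\bar\phi = I + \bar A_1$ are $0$, and $\bar\phi$ is nilpotent with $\bar\phi^2 = 0$. Moreover, the projection of $\phi(p_0)$ onto $Y_P$ is easily computed to be nonzero (essentially the characteristic function of the set of points not perpendicular to $p_0$, minus a constant correction), so $\bar\phi \neq 0$.

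From these facts (a) and (b) follow. The algebra $\End_{k\overline{G(q)}}(Y_P)$, being a $2$-dimensional commutative algebra over the algebraically closed field $k$, is either the semisimple algebra $k \times k$ (in which case $Y_P = L_1 \oplus L_2$ with $L_1 \not\cong L_2$ simple, so $L = \soc(Y_P) = Y_P$ is not simple), or the local algebra $k[x]/(x^2)$ (in which case $Y_P$ is indecomposable and uniserial of Loewy length $2$ with simple socle $L$ and simple head $L^* \cong L$). In the semisimple case, any endomorphism acts by a pair of scalars on the two summands, so $\bar\phi^2 = 0$ forces $\bar\phi = 0$, contradicting the nonvanishing above. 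Therefore $\End(Y_P) \cong k[x]/(x^2)$ and $Y_P$ is uniserial with simple socle $L$, proving (b). Part (a) follows immediately: if $k[P]$ were semisimple, so would its summand $Y_P$ be, which we have just ruled out.

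Part (c) is a corollary via Frobenius reciprocity: $\dim \Hom_{k\overline{G(q)}}(k[P], S) = \dim S^{\overline{G(q)}_P}$ for any simple $S$, so the nontrivial simples with $\overline{G(q)}_P$-fixed vectors are precisely the simple constituents of $\head(k[P]) = k \oplus \head(Y_P) \cong k \oplus L$, whence $L$ is the unique such simple up to isomorphism. For (d), decompose $\phi$ along $k[P] = k\allone \oplus Y_P$. On $k\allone$, $\phi$ acts by the scalar $\lambda = |p_0^\perp \cap P|$, and the recursion $\lambda = 1 + q \cdot |P'|$, where $P'$ is the point set of the smaller polar space $p_0^\perp/\langle p_0 \rangle$ with $|P'| \equiv 1 \pmod p$, gives $\lambda \equiv 1 \pmod p$, so $\phi|_{k\allone}$ is an isomorphism. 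On $Y_P$, $\bar\phi$ is a nonzero endomorphism of the uniserial module with $\bar\phi^2 = 0$, so $\image \bar\phi$ is a proper nonzero submodule, necessarily the unique such submodule $L = \soc(Y_P)$. Thus $\image \phi = k\allone \oplus L$.

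The main obstacle will be the uniform verification that the two non-principal eigenvalues of the collinearity adjacency matrix reduce to $-1$ modulo $p$ across the orthogonal types $B_\ell(q)$, $D_\ell(q)$, $^2D_\ell(q)$ and the Hermitian type $^2A_\ell(q)$. This requires handling each case individually with the appropriate formulas for the SRG parameters, though the underlying reason ($q \equiv 0 \pmod p$, together with the fact that each non-principal eigenvalue is of the form $\pm q^e - 1$ for some $e \geq 1$) is uniform.
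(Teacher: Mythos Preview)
Your overall strategy is sound and close to the paper's, but there is a real gap in the deduction of (b). From $\End_{k\overline G(q)}(Y_P)\cong k[x]/(x^2)$ you correctly conclude that $Y_P$ is indecomposable, but the further claim that $Y_P$ is uniserial of Loewy length~2 with simple socle does \emph{not} follow from the ring structure alone: an indecomposable self-dual module can have local endomorphism ring and yet a non-simple socle. What is needed is precisely the argument the paper supplies. Using the isomorphism $\head(Y_P)\cong L^*\cong L=\soc(Y_P)$ established just before the lemma, one forms the composite $\psi:Y_P\twoheadrightarrow\head(Y_P)\cong L\hookrightarrow Y_P$. If $L=\bigoplus_{i=1}^m L_i$ with $L_i$ simple, then the maps $\psi_i$ obtained by following $\psi$ with the projection onto $L_i$ are $m$ linearly independent nilpotent endomorphisms (they have the distinct images $L_i$), hence $m\le\dim\rad\End(Y_P)=1$. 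Without this step your argument for (b) is incomplete, and consequently so are (c) and (d), which rest on it.

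A second point of comparison: your route to $\bar\phi^2=0$ via the strongly regular graph parameters works but requires the case-by-case verification you flag as the ``main obstacle''. The paper avoids this entirely by computing $\phi^2$ directly: the coefficient of $p''$ in $\phi^2(p)$ is $\lvert p^\perp\cap (p'')^\perp\cap P\rvert$, and this is $\equiv 1\pmod p$ uniformly (the singular points in any subspace of a polar space over $\F_q$ number $\equiv 1\pmod q$). Hence $\phi^2=\pi_{\allone}$ and $(\phi-\pi_{\allone})^2=0$ in one stroke, with no eigenvalue tables. This is both shorter and covers the dual Hermitian quadrangle case (where $P$ consists of totally singular planes rather than points) without separate treatment.
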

\begin{proof}
A $3$-dimensional semisimple $k$-algebra is a direct product
of $3$ copies of $k$ so has no nilpotent elements.
We see that $\phi^2(p)=\sum_{p''\in P}a_{p,p''}p'$, with
\begin{equation*}
a_{p,p''}=\abs{\{p'\mid p'\in p^\perp, p''\in p'^\perp\}}=\abs{p^\perp\cap p''^\perp}\pmod p=\allone.
\end{equation*}
Similarly, we see that $\phi\circ\pi_{\allone}=pi_{\allone}=\pi_{\allone}\circ\phi$.
Thus $(\phi-\pi_{\allone})^2=0$, which proves (a).
To prove (b), we fix an isomorphism from $\head(Y_P)$
with $L=\soc(Y_P)$ and let $\psi\in\End_{k{\overline G(q)}}(Y_P)$ be the composite map
$Y_P\to \head(Y_P)\cong L\subset Y_P$.
Given a decomposition of $L$ into a direct sum of $m$ simple modules,
 the $m$ projections onto these simple summands  are linearly independent
elements of $\End_{k{\overline G(q)}}(L)$. Their compositions with $\psi$,
together with $\id_{Y_P}$, give $m+1$ linearly independent elements of 
$\End_{k{\overline G(q)}}(Y_P)$. Hence $m=1$,  by Lemma~\ref{endodims}(c).
Part (c) follows from (b) and  Frobenius reciprocity, since $k[P]$ can be viewed as the
induced module $\ind_{{\overline G(q)}_P}^{{\overline G(q)}}(k)$.
To prove (d), observe that  $\pi_{\allone}$, $\pi_{Y_P}$ and 
$\psi\circ\pi_{Y_P}$ form a basis of
$\End_{k{\overline G(q)}}(k[P])$. It follows the image of a nonzero
$k{\overline G(q)}$-endomorphisms of $k[P]$ must be one of
$\allone$, $Y_P$, $L$, $k[P]$ and $k\allone\oplus L$.
Since $\phi\neq\pi_{\allone}$ and  $\phi^2=\pi_{\allone}$, it follows 
$\image\phi=k\allone\oplus L$.
\end{proof}

\begin{corollary}\label{reduction1} $\rank_pA_{11}=1+\dim L$, where $L$ is the unique nontrivial
simple $k{\overline G(q)}$-module on which ${\overline G(q)}_P$ has nontrivial fixed points.
\end{corollary}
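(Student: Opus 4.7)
The plan is to deduce the corollary directly from the preceding lemma by identifying $A_{11}$ with the matrix representation of the endomorphism $\phi$ and using that matrix rank is preserved under field extension.

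First I would make the identification explicit. The incidence matrix $A_{11}$ of the pair $(P,P^*)$ has rows indexed by $P$ and columns indexed by $P^*$, with a $1$ in position $(p,p^{\prime\perp})$ exactly when $p\subset p^{\prime\perp}$. Using the ${\overline G(q)}$-equivariant bijection $P^*\to P$, $p^{\prime\perp}\mapsto p'$, the columns of $A_{11}$ are reindexed by $P$, and the resulting matrix is precisely the matrix of $\phi:p\mapsto \sum_{p'\in p^\perp}p'$ in the basis $P$ of $k[P]$. (Symmetry of the perp relation makes the identification of rows versus columns immaterial.)

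Next I would argue that the $p$-rank of $A_{11}$ coincides with $\dim_k\image\phi$. Since $A_{11}$ is a $0$--$1$ matrix, its rank over $\F_p$ equals its rank over any extension field, in particular over the algebraically closed field $k$ of characteristic $p$. But the rank over $k$ is by definition $\dim_k\image\phi$.

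Finally I would invoke part (d) of the preceding lemma, which gives $\image\phi=k\allone\oplus L$, so
\[
\rank_pA_{11}=\dim_k\image\phi=\dim_k(k\allone)+\dim_k L=1+\dim L,
\]
and the characterization of $L$ as the unique nontrivial simple $k{\overline G(q)}$-module with ${\overline G(q)}_P$-fixed points comes from part (c) of the same lemma. There is no real obstacle here: the entire content of the corollary has already been extracted from the structure of $\End_{k{\overline G(q)}}(k[P])$ in the lemma, so the argument is essentially a translation between the linear-algebraic and module-theoretic descriptions of $\phi$.
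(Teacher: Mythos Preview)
Your argument is correct and matches the paper's intended deduction: the paper has already identified $A_{11}$ with the matrix of $\phi$ via the bijection $P^*\cong P$, and the corollary is then immediate from parts (c) and (d) of the preceding lemma together with the invariance of rank under field extension. Your write-up simply makes explicit the routine steps the paper leaves implicit.
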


\subsubsection{The dual Hermitian quadrangle}
Let $(V(q^2), h)$ be a 5-dimensional Hermitian space and now  define
$P$ to be is the set of $2$-dimensional totally singular subspaces,
instead of the one-dimensional ones considered above.
This new $P$ is the set of points of the dual Hermitian quadrangle.
The above discussion carries through with 
${\overline G(q)}=\SU(V(q^2),h)$, which acts with rank 3 on $P$, and we obtain 
Corollary\ref{reduction1} in that case too.

\begin{remark}
In all of the cases above, the simplicity of the socle and head of $Y_P$
can also be derived from the very general theory in \cite{Cu}. Indeed that
theory shows that for the permutation
module on the cosets of any maximal parabolic subgroup in a finite group
with a split $(B,N)$-pair of characteristic $p$, there is a decomposition (\ref{1plusY})
such that the socle and head of the nontrivial summand are simple.
\end{remark}

\subsection{Representations of finite Chevalley groups}
The aim of this subsection is to obtain further information about the simple module $L$.
To each of the Chevalley groups ${\overline G(q)}$ in the previous subsection, there
corresponds a central extension $G(q)$ called the universal Chevalley group.
Any representation of ${\overline G(q)}$ gives a representation of $G(q)$ by composing
with the natural map $G(q)\to {\overline G}(q)$. If we let $G(q)_P$ be the full preimage 
in $G(q)$ of ${\overline G}(q)_P$, then $G(q)_P$ is the point stabilizer in
$G(q)$ and $L$ is the unique simple $G(q)$-module
on which $G(q)_P$ has nonzero fixed points. 
For technical reasons, it is preferable to work with $G(q)$. Since our goal is to determine $\dim L$, there is no loss in replacing ${\overline G}(q)$ by $G(q)$.

A basic fact \cite[13.1]{St2} about simple $kG(q)$-modules is that they are the restrictions
of certain simple rational modules for $G(k)$, the ambient Chevalley group over $k$.
By this we mean that $G(k)$ has an endomorphism $\sigma$, such that $G(q)$
is the subgroup of fixed points (\cite[12.4]{St2}). 
Further, we may assume that the subgroups $T$ and $B$ are $\sigma$-stable
and denote by $T(q)$ and $B(q)$ the subgroups of $T$ and $B$ fixed by $\sigma$.
$G(k)$ is a simply connected algebraic group, since $G(q)$ is universal.
In the notation \S\ref{alggps}, let $L(\lambda)$
denote the simple $G(k)$-module with highest weight $\lambda\in X_+$.
The simple module $L(\lambda)$ is characterized by the property
that it has a unique $B$-stable line, and  $T$ acts on this line
by the character $\lambda$.
By Steinberg's theorem \cite[13.3]{St2}, the simple $G(k)$ modules with
highest weights in the set 
\begin{equation*}
X_q=\{\lambda=\sum_{i=1}^\ell a_i\omega_i\in X_+ \mid \,  0\leq a_i\leq q-1\ (\forall i)\}
\end{equation*}
remain irreducible upon restriction to $G(q)$ and this gives a complete set
of mutually nonisomorphic simple $kG(q)$-modules.

 Our immediate goal is to identify the highest weight $\lambda\in X_q$ such that
the restriction of the $G(k)$-module $L(\lambda)$ to $G(q)$ is isomorphic
to $L$.

\begin{lemma}\ 
\begin{enumerate} 
\item[(a)] For $G(q)$ in Theorem~\ref{Oprank}  we have
$L=L((q-1)\omega_1)$.
\item[(b)] For $G(q)$ in Theorem~\ref{Aprank} we have $L=L((q-1)(\omega_1+\omega_\ell))$.
\item[(c)] For $G(q)$ in Theorem~\ref{dualHermitian} we have $L=L((q-1)(\omega_2+\omega_3))$.
\end{enumerate}
\end{lemma}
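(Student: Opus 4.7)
\emph{Plan.} The strategy is to exhibit, in each of the three cases, an explicit nonzero $G(q)_P$-fixed vector in $L(\lambda)$ for the $\lambda$ asserted in the statement. Since $L$ is already known to be the unique nontrivial simple $kG(q)$-module occurring in $k[P]=\ind_{G(q)_P}^{G(q)}(k)$---equivalently, by Frobenius reciprocity, the unique one with nonzero $G(q)_P$-fixed vectors---producing such a fixed vector in $L(\lambda)$ forces $L\cong L(\lambda)$.

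First I identify $G(q)_P = (P_J)^\sigma$ for a $\sigma$-stable standard parabolic $P_J\subseteq G(k)$. Taking the base point to be the highest-weight subspace of the relevant natural module---namely $\langle e_1\rangle$ for the orthogonal and natural unitary cases, and $\langle e_1,e_2\rangle$ for the dual Hermitian quadrangle---and using that $G(q)_P$ also stabilizes the corresponding polar flag, one finds $S\setminus J$ equal to $\{\alpha_1\}$, $\{\alpha_1,\alpha_\ell\}$, or $\{\alpha_2,\alpha_3\}$ in the three cases respectively. These exactly match $\lambda=(q-1)\lambda_0$ with $\lambda_0=\sum_{\alpha_i\notin J}\omega_i$ equal to $\omega_1$, $\omega_1+\omega_\ell$, or $\omega_2+\omega_3$.

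Next, I claim the highest-weight vector $v^+\in L(\lambda)$ is fixed by $G(q)_P$. It is killed by every positive root subgroup, hence by the unipotent radical of $P_J$; and the vanishing $\langle\lambda,\alpha_j^\vee\rangle=0$ for $\alpha_j\in J$ forces the derived subgroup $[L_J,L_J]$ to act trivially on $\langle v^+\rangle$. What remains is to check $\lambda|_{T^\sigma}=1$. For untwisted types this is immediate, since $T^\sigma=T(\F_q)$ and $\lambda_0(t)\in\F_q^\times$ give $\lambda(t)=\lambda_0(t)^{q-1}=1$. For the twisted types $^2D_\ell(q)$ and $^2A_\ell(q)$, one uses $\sigma^2=\Phi_{q^2}$, hence $T^\sigma\subseteq T(\F_{q^2})$, together with the $\sigma$-invariance of $\lambda_0$ (since $S\setminus J$ is $\sigma$-stable), to deduce either $\lambda_0(t)\in\F_q^\times$ (in type $D$, via $\lambda_0(t)^q=\lambda_0(t)$) or $\lambda(t)=\omega_1(t)^{q^2-1}=1$ (in type $A$, after writing $\omega_\ell(t)=\omega_1(t)^q$ on $T^\sigma$ to get $\lambda_0(t)=\omega_1(t)^{q+1}$).

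Combining these observations gives $0\neq v^+\in L(\lambda)^{G(q)_P}$, which forces $L\cong L(\lambda)$ by the uniqueness above. The only genuinely technical step is the torus-triviality check for the twisted groups, but this reduces to the short direct computations just sketched once the explicit action of $\sigma$ on $T$ is written down.
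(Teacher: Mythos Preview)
Your argument is correct and shares the paper's core idea: show that the highest weight vector $v^+\in L(\lambda)$ is fixed by $G(q)_P$, then invoke the uniqueness already established via Frobenius reciprocity. The paper carries this out concretely, realizing $v^+$ as $v^{q-1}\in S^{q-1}(V)$ (resp.\ $v^{q-1}\otimes\delta^{q-1}\in S^{q-1}(V)\otimes S^{q-1}(V^*)$, resp.\ the analogue built from $\wedge^2 V$), so that the defining property of $G(q)_P$ as the stabilizer of $\langle v\rangle$ transports directly to the highest weight line; triviality of the resulting character on $T(q)$ is then checked. You instead identify $G(q)_P=(P_J)^\sigma$ for the $\sigma$-stable parabolic with $S\setminus J$ matching the support of $\lambda$, and verify that $R_u(P_J)$, $[L_J,L_J]$, and $T^\sigma$ each fix $v^+$. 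Your route is slightly more streamlined in that it avoids the auxiliary symmetric-power modules altogether; it does, however, implicitly rely on the fact that the character of $P_J$ on $\langle v^+\rangle$ restricted to $(P_J)^\sigma$ is determined by its restriction to $T^\sigma$ (equivalently, that $(L_J)^\sigma$ is generated by $T^\sigma$ together with elements of $[L_J,L_J]$), which is standard structure theory for finite reductive groups but worth making explicit. The paper's concrete realization has the compensating virtue of making the line-stabilization step tautological, since $G(q)_P$ is literally defined as the stabilizer of $\langle v\rangle$.
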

\begin{proof}
Consider the action of the $G(k)$ on its standard module $V$.
We may assume that $V$ is obtained from the standard module of $G(q)$
by extending scalars from $\F_q$ or $\F_{q^2}$ to $k$.
Then the highest weight space is isotropic. Let $G(k)_P$ be its stabilizer.
Because $T$ and $B$ are $\sigma$-stable, the highest weight space contains a vector $v$
in the standard module of $G(q)$.
The subgroup $G(q)_P$ was previously defined only up to conjugacy in $G(q)$.
Now we fix our choice by setting $G(q)_P=G(k)_P\cap G(q)$.
The weight of $v$ is $\omega_1$, so the element $v^{q-1}\in S^{q-1}(V)$ is
a weight vector of weight $(q-1)\omega_1$, this weight being the highest in
$S^{q-1}(V)$ and having multiplicity one.
It follows that $S^{q-1}(V)$ has a unique composition factor isomorphic
to $L((q-1)\omega_1)$ and that $G(k)_P$ stabilizes the highest weight space
in this simple module. Since the restriction of the weight $(q-1)\omega_1$
to $T(q)$ is trivial, it follows that $G(q)_P$ acts trivially on this
highest weight space. Therefore $L=L((q-1)\omega_1)$.

In the case of the unitary group 
the standard module $V$ of $G(k)$ has highest weight $\omega_1$ 
and the dual module $V^*$ has highest weight $\omega_\ell$.
Let $v$ and $\delta$ be, respectively, weight vectors with these weights. 
Then the vector $v^{q-1}\otimes \delta^{q-1}$ spans the highest weight
space in $S^{q-1}(V)\otimes S^{q-1}(V^*)$. Now there is a
$kG(q)$-isomorphism $V^{(q)}\cong V^*$, mapping the highest weight vector $v\in V^{(q)}$
to the highest weight vector $\delta\in V^*$.
Hence $G(q)_P$ stabilizes the span of $v^{q-1}\otimes \delta^{q-1}$, which
is the weight space of weight $(q-1)(\omega_1+\omega_\ell)$, the highest in
$S^{q-1}(V)\otimes S^{q-1}(V^*)$. It follows that $G(q)_P$
stabilizes the highest weight space in the simple module $L((q-1)(\omega_1+\omega_\ell))$
and since the restriction of $(q-1)(\omega_1+\omega_\ell)$ to $T(q)$
is equal to the restriction of $(q-1)(\omega_1+ q\omega_1)=(q^2-1)\omega_1$, which is trivial, we have shown that $G(q)_P$ fixes a nonzero vector in 
$L((q-1)(\omega_1+\omega_\ell))$, which implies that
$L\cong L((q-1)(\omega_1+\omega_\ell))$.

Finally, we give an analogous argument of which the case $\ell=4$  applies in 
the case (A'').
In this case we can find a totally singular $2$-dimensional 
subspace of the standard module $V(q^2)$ of $G(q)$ such that
$\wedge^2Z$ spans the highest weight space in $\wedge^2(V)=\wedge^2(V(q^2)\otimes_{\F_{q^2}}k)$. We define $G(q)_P$ to be the stabilizer in $G(q)$ of $Z$
and $G(k)_P$ to be the stabilizer of the highest weight
space (of weight $\omega_2$) in $\wedge^2(V)$.  Then 
$G(k)_P$ also stabilizes the highest weight space in in $S^{q-1}(\wedge^2(V))$.
Since $V^*\cong V^{(q)}$ as $kG(q)$-modules, it 
follows that $G(q)_P$ stabilizes the one-dimensional highest weight
(of weight $\omega_{\ell-1}$) in $\wedge^2(V^*)$ and thus also the
space of weight $(q-1)\omega_3$ in $S^{q-1}(\wedge^2(V^*))$ and finally the
highest weight space of weight $(q-1)(\omega_2+\omega_{\ell-1})$ in 
$S^{q-1}(\wedge^2(V)\otimes S^{q-1}(\wedge^2(V^*))$. This implies that
$G(q)_P$ stabilizes the highest weight space in 
$L((q-1)(\omega_2+\omega_{\ell-1}))$, and we see by inspection that
this space affords the trivial character of $T(q)$.
We conclude that $L\cong L((q-1)(\omega_2+\omega_{\ell-1}))$ in this case.
\end{proof}

Next we obtain a further refinement using Steinberg's Tensor Product Theorem
\cite[Theorem 1.1]{St}.

In the orthogonal cases the $p$-adic expression of the highest weight of $L$ is 
\begin{equation*}
(q-1)\omega_1=(p-1)\omega_1+p(p-1)\omega_1+\cdots+p^{t-1}(p-1)\omega_1.
\end{equation*}
So by Steinberg's Tensor Product Theorem, we have
\begin{equation*}
L=L((q-1)\omega_1)=L((p-1)\omega_1)\otimes
L((p-1)\omega_1)^{(p)}\otimes\cdots\otimes
L((p-1)\omega_1)^{(p^{t-1})}.
\end{equation*}

This is a twisted tensor product, with $t$ tensor factors, in which the
factor $L((p-1)\omega_1)^{(p^{i})}$ is the module
obtained from the simple $G(k)$-module of highest weight 
$(p-1)\omega_1$ by composing the representation with the
$i$-th power of the Frobenius endomorphism of $G(k)$.

In the unitary case we have the analogous result with $L((p-1)\omega_1)$ replaced by
$L((p-1)(\omega_1+\omega_\ell))$ and in the case of the dual Hermitian quadrangle we replace $L((p-1)\omega_1)$ 
with $L((p-1)(\omega_2+\omega_3))$.

We can summarize these findings in the following statement.

\begin{theorem}
\begin{enumerate}
\item[(a)] $\rank_p A_{11} = 1+(\dim_k L((p-1)\omega_1))^t$
in the orthogonal cases (Theorem~\ref{Oprank}).
\item[(b)] $\rank_p A_{11} = 1+(\dim_k L((p-1)(\omega_1+\omega_\ell)))^t$
in the Hermitian case (Theorem~\ref{Aprank}).
\item[(c)] $\rank_p A_{11} = 1+(\dim_k L((p-1)(\omega_2+\omega_3)))^t$
in case of the dual Hermitian generalized quadrangle (Theorem~\ref{dualHermitian}).
\end{enumerate}
\end{theorem}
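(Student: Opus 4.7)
The plan is simply to assemble the three ingredients developed over the preceding subsections, which together reduce each $p$-rank to the $t$-th power of the dimension of a specific simple module for the algebraic group $G(k)$ over the prime field.

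First, I would invoke Corollary~\ref{reduction1}, which already gives $\rank_p A_{11} = 1 + \dim L$ in all three settings, where $L$ is the unique nontrivial simple $k\overline{G}(q)$-module admitting a nonzero fixed vector under the point stabilizer $\overline{G}(q)_P$. This reduces every case to computing $\dim L$.

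Second, I would use the preceding identification lemma, which pins down $L$ explicitly: $L \cong L((q-1)\omega_1)$ in the orthogonal cases, $L \cong L((q-1)(\omega_1+\omega_\ell))$ in the Hermitian case, and $L \cong L((q-1)(\omega_2+\omega_3))$ in the dual Hermitian quadrangle case. At this point, everything is phrased in terms of restrictions of simple rational $G(k)$-modules, and the Steinberg restriction theorem ensures these restrictions remain simple as $kG(q)$-modules.

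Third, I would apply Steinberg's Tensor Product Theorem to each of the three highest weights. Writing $q = p^t$, the $p$-adic expansion produces a twisted tensor product decomposition
\begin{equation*}
L((q-1)\mu) \;\cong\; L((p-1)\mu) \otimes L((p-1)\mu)^{(p)} \otimes \cdots \otimes L((p-1)\mu)^{(p^{t-1})},
\end{equation*}
where $\mu$ is $\omega_1$, $\omega_1+\omega_\ell$, or $\omega_2+\omega_3$ according to the case. Since each Frobenius twist $L((p-1)\mu)^{(p^i)}$ has the same underlying vector space as $L((p-1)\mu)$, the dimension of the tensor product is $(\dim L((p-1)\mu))^t$. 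Combining this with Corollary~\ref{reduction1} yields the three displayed formulas.

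There is no substantive obstacle — the work has already been done in assembling Corollary~\ref{reduction1}, in identifying the relevant highest weight, and in invoking Steinberg's theorem. The only point requiring care is ensuring the $p$-adic expansion of $q-1$ really is $(p-1)(1 + p + \cdots + p^{t-1})$, so that Steinberg's theorem applies cleanly with $t$ identical (up to Frobenius twist) tensor factors; this is immediate from $q = p^t$.
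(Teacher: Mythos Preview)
Your proposal is correct and follows essentially the same route as the paper: the theorem is presented there as a summary obtained by combining Corollary~\ref{reduction1}, the lemma identifying $L$ with $L((q-1)\mu)$ for the appropriate $\mu$, and Steinberg's Tensor Product Theorem applied to the $p$-adic expansion $(q-1)\mu=\sum_{i=0}^{t-1}p^i(p-1)\mu$.
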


Now the theorems of \S~\ref{prankresults}
follow from Theorems~\ref{ThmB}, \ref{ThmD}, \ref{ThmA}, and \ref{ThmA''}.


\subsection*{Acknowlegdements}
We wish to thank Eric Moorhouse for helpful discussions and for
showing us his computer calculations. We are also  indebted to the
developers of the computer algebra system SAGE \cite{SAGE}, which
was used to work out many examples.



\end{document}